\chardef\bslash=`\\ % p. 424, TeXbook
\newtheorem{thm}{Theorem}[section]
\newtheorem{cor}[thm]{Corollary}
\newtheorem{lem}[thm]{Lemma}
\newtheorem{prop}[thm]{Proposition}
\theoremstyle{definition}
\newtheorem{rem}[thm]{Remark}
\theoremstyle{remark}
\newcommand{\eval}[2][\right]{\relax
  \ifx#1\right\relax \left.\fi#2#1\rvert}
\begin{document}
\title{Sizing the White Whale}

\author{Antoine Deza}
\address{McMaster University, Hamilton, Ontario, Canada}
\email{deza@mcmaster.ca}

\author{Mingfei Hao}
\address{McMaster University, Hamilton, Ontario, Canada}
\email{haom6@mcmaster.ca}

\author{Lionel Pournin}
\address{Universit{\'e} Paris 13, Villetaneuse, France}
\email{lionel.pournin@univ-paris13.fr}

%\date{Received on MONTH, YEAR}
%\issueinfo{VOL}{NUM}{MONTH}{YEAR}
%\doiinfo{10.1007/DOI-NUMBER}
\begin{abstract}
We propose a computational, convex hull free framework that takes advantage of the combinatorial structure of a zonotope, as for example its symmetry group, to orbitwise generate all canonical representatives of its vertices. We illustrate the proposed framework by generating all the 1\,955\,230\,985\,997\,140 vertices of the $9$\nobreakdash-dimensional \emph{White Whale}. We also compute the number of edges of this zonotope up to dimension $9$ and exhibit a family of vertices whose degree is exponential in the dimension. The White Whale is the Minkowski sum of all the $2^d-1$ non-zero $0/1$\nobreakdash-valued $d$\nobreakdash-dimensional vectors. The central hyperplane arrangement dual to the White Whale, made up of the hyperplanes normal to these vectors, is called the {\em resonance arrangement} and has been studied in various contexts including algebraic geometry, mathematical physics, economics, psychometrics, and representation theory. 
\end{abstract}
\maketitle
%\tableofcontents

%%%%%%%%%%%%%%%%%%%%%%%%%%%%%%%%%%%%%%%%%%%%%
%\newpage
%%%%%%%%%%%%%%%%%%%%%%%%%%%%%%%%%%%%%%%%%%%%%%%
\section{Introduction}\label{CZ.sec.introduction}

Given a set $G=\{g^1,g^2,\ldots,g^m\}$ of non-zero vectors from $\mathbb{R}^d$, a zonotope $H_G$ can be defined as the convex hull of all the $2^m$ subsums of the vectors in $G$. 
% or by a slight abuse of terminology, as the Minkowski sum of $G$:
Equivalently, $H_G$ is the Minkowski sum of the line segments between the origin of $\mathbb{R}^d$ and the extremity of a vector from $G$:
$$
H_G=\mathrm{conv}\left\{\sum_{j=1}^{m}\varepsilon_j g^j : \varepsilon_j\in\{0,1\}\right\}\!\mbox{.}
$$

Hereafter, the vectors contained in $G$ are referred to as the generators of $H_G$. The associated hyperplane arrangement $\mathcal{A}_{G}$ is made up of the hyperplanes
$$
H^j=\{x\in\mathbb{R}^d : x^Tg^j =0\}
$$
through the origin of $\mathbb{R}^d$ and orthogonal to a vector in $G$. The chambers, or regions, of $\mathcal{A}_{G}$ are the connected components of the complement in $\mathbb{R}^d$ of the union of the hyperplanes in $\mathcal{A}_{G}$. By the duality between zonotopes and hyperplane arrangements, the vertices of $H_G$ and the chambers of $\mathcal{A}_G$ are in one-to-one correspondence.

The characteristic polynomial $\chi(\mathcal{A}_{G};t)$ of $\mathcal{A}_{G}$ is defined as
$$\chi(\mathcal{A}_{G};t)= b_0(\mathcal{A}_{G})t^d-b_1(\mathcal{A}_{G})t^{d-1}+b_2(\mathcal{A}_{G})t^{d-2}\dots(-1)^d b_d(\mathcal{A}_{G}).$$
where the coefficients $b_i(\mathcal{A}_{G})$ are called the Betti numbers with $b_0(\mathcal{A}_{G})=1$ and $b_1(\mathcal{A}_{G})=m$~\cite{Stanley2012}. The number of chambers of $\mathcal{A}_{G}$, and thus the number of vertices of $H_G$, is equal to 
$b_0(\mathcal{A}_{G})+b_1(\mathcal{A}_{G})+\dots+b_d(\mathcal{A}_{G})$.\\

We propose a computational framework that goes beyond counting the vertices of $H_G$ as it explicitly generates all of these vertices. Since a zonotope is also a polytope, this can theoretically be achieved from a convex-hull computation. This kind of computation can be performed in a more efficient way by exploiting the potentially large symmetry group of $H_G$. Instead of generating all of the vertices of $H_G$, our framework restricts to generating one canonical representative in the orbit of each vertex under the action of that group. The whole vertex set of $H_G$ can then be recovered by letting the symmetry group of $H_G$ act on these representatives. Minkowski sum computations can be performed via recursive convex hulls by adding the generators one by one. We refer to~\cite{AvisBremnerSeidel1997,AvisFukuda1992,AvisJordan2018,DezaPournin2022,Fukuda2015,GawrilowJoswig2000}
and references therein for more details about  convex hull computations, orbitwise enumeration algorithms, and Minkowski sum computations. While a number of practical algorithms have been developed, this kind of task is highly computationally expensive. For this reason, our framework is convex hull free. It also exploits the combinatorial properties of Minkowski sums, and involves a linear optimization oracle whose complexity is polynomial in the number $m$ of generators. We establish additional combinatorial properties of a highly structured zonotope---the White Whale~\cite{Billera2019}---that allow for a significant reduction of the number of such linear optimization oracle calls, and thus to perform the orbitwise generation of all the 1 955 230 985 997 140 vertices of the $9$-dimensional White Whale. This zonotope appears in a number of contexts as for example algebraic geometry, mathematical physics, economics, psychometrics, and representation theory~\cite{Kuhne2021,ChromanSinghal2021,Evans1995,GutekunstMeszarosPetersen2019,KamiyaTakemuraTerao2011,Kuhne2020,vanEijck1995,Wang2013} and is a special case of the \emph{primitive zonotopes}, a family of zonotopes originally considered in relation with the question of how large the diameter of a lattice polytope can be \cite{DezaManoussakisOnn2018}. We refer to Fukuda~\cite{Fukuda2015}, Gr\"unbaum~\cite{Grunbaum2003}, and Ziegler~\cite{Ziegler1995} for polytopes and, in particular, zonotopes.

In Section~\ref{sec:zonotope}, we present two algorithms that exploit the combinatorial structure of a zonotope to compute its vertices. In Section~\ref{sec:whitewhale}, we give several additional properties of the White Whale that allows for an improved version of these algorithms, making it possible to orbitwise generate the vertices of the $9$\nobreakdash-dimensional White Whale. % in reasonable time. 
We then explain in Section~\ref{edge-gen} how the number of edges of the White Whale can be recovered from the list of its vertices, and provide these numbers up to dimension $9$. Finally, we study the degrees of its vertices in Section~\ref{sec:degree} and, in particular, we determine the degree in all dimensions of a particular family of vertices, which shows that the degree of some of the vertices of the White Whale is exponential in the dimension.

%%%%%%%%%%%%%%%%%%%%%%%%%%%%%%%%%%%%%%%%%%%%%%%%%%%%%%%%%%%%%%%%%%%%%%%%%%%%%%%%%%%%%%%
\section{Generating the vertices of a zonotope}\label{sec:zonotope}

By its combinatorial structure, linear optimization over a zonotope is polynomial in the number $m$ of its generators.  In particular, checking whether a point $p$, given as the sum of a subset $S$ of the generators of $H_G$, is a vertex of $H_G$ is equivalent to checking whether the following system of $m$ inequalities is feasible, which amounts to solving a linear optimization problem.
$$
(LO_{S,G})\left\{
\begin{array}{rcl}
c^Tg^j\geq1 & \mbox{ for all } & g^j\in S\mbox{,}\\
c^Tg^j\leq-1 & \mbox{ for all }  & g^j\in G\mathord{\setminus}S\mbox{.}
\end{array}
\right.
$$

Note that we can assume without loss of generality that no two generators of $H_G$ are collinear. In the sequel, we denote by $p(S)$ the sum of the vectors contained in a subset $S$ of $G$, with the convention that $p(\emptyset)$ is the origin of $\mathbb{R}^d$. Observe that for every vertex $v$ of $H_G$ there is a unique subset $S$ of $G$ such that $v$ is equal to $p(S)$. If $(LO_{S,G})$ is feasible; that is, if there exists a vector $c$ satisfying the above system of $m$ inequalities, then $p(S)$ is the unique point that maximizes $c^T x$ when $x$ ranges within $H_G$.

A brute-force linear optimization based approach would essentially consist in calling the oracle $(LO_{S,G})$ on each of the $2^m$ subsets $S$ of $G$. Since any edge of a zonotope is, up to translation, the line segment between the origin and an element of $G$, for any vertex $v=p(S)$ of $H_G$ with $S\neq\emptyset$  there exists a generator $g^i$ in $S$ such that $v$ and $p(S\mathord{\setminus}\{g^i\})$ are the vertices of an edge of $H_G$. Consequently, the brute-force approach can be enhanced by considering the following layered formulation, that results in Algorithm~\ref{LOG}. Consider the layer $\mathcal{L}_k(G)$ made up of the vertices of $H_G$ obtained as the sum of exactly $k$ of its generators. By a slight abuse of notation, we identify from now on a subset $S$ of $G$ such that $p(S)$ is a vertex of $H_G$ with the vertex itself. 
% It is useful to remember here
Recall that two different subsets of $G$ cannot sum to a same vertex of $H_G$. By this identification, $\mathcal{L}_k(G)$ can be written as follows:
$$
\mathcal{L}_k(G)=\{S\subseteq G \mbox{ such that } |S|=k \mbox{ and } p(S) \mbox{ is a vertex of } H_G \}\mbox{.}
$$

Assuming that $\mathcal{L}_k(G)$ is known, one can consider for each $S$ in $\mathcal{L}_k(G)$ the $m-k$ points $p(S)+g^j$ for $g^j\in G\backslash S$. Calling $(LO_{S,G})$ on all such points  $p(S)+g^j$ allows for the determination of all the vertices of $H_G$ that are equal to a subsum of exactly $k+1$ elements of $G$. %Since the origin $p(\emptyset)$ is always a vertex of $H_G$, t
That recursive layered approach allows for a significant speedup as the number of vertices equal to a subsum of exactly $k$ elements of $G$ is in practice much smaller that the upper bound of
$$
{m\choose{k}}
$$
and the number of $(LO_{S,G})$ calls is in practice much smaller than

$$
2^m=\sum_{k=0}^m{m\choose{k}}\!\mbox{.}
$$

In order to compute the layer $\mathcal{L}_{k+1}(G)$, one only needs knowledge of the previous layer $\mathcal{L}_k(G)$. In particular, the memory required by the algorithm is limited to the storage of only two consecutive layers. In Line 10 of Algorithm~\ref{LOG}, the layer $\mathcal{L}_{k+1}(G)$ that has just been computed is stored. At the same time, the layer  $\mathcal{L}_k(G)$ can be removed from the memory.

\begin{algorithm}[t]\label{LOG}
\KwIn{the set $G$ of all the $m=|G|$ generators of $H_G$}

$\mathcal{L}_0(G)\leftarrow \emptyset$

\For{$k=0,\dots,m-1$}{
\For{each $S\in\mathcal{L}_k(G)$}{
\For{each $g^j\in G\backslash S$}{
\If{$(LO_{S\cup \{ g^j\},G})$ is feasible}{
$\mathcal{L}_{k+1}(G)\leftarrow \mathcal{L}_{k+1}(G) \cup \{S\cup \{ g^j \}\}$
}
}
}
Save $\mathcal{L}_{k+1}(G)$
}
% Return $\{\mathcal{L}_k(G):0\leq{k}\leq{m}\}$
\caption{Layered optimization-based vertex generation}
\end{algorithm}
\begin{algorithm}[b]\label{LOOG}
\KwIn{set $G$ of all the $m=|G|$ generators of $H_G$}

 $\widetilde{\mathcal{L}}_0(G)\leftarrow\emptyset$

\For{$k=0,\dots,\lfloor m/2 \rfloor-1$}{

$i\leftarrow0$

\For{each $S\in\widetilde{\mathcal{L}}_k(G)$}{
\For{each $g^j\in G\backslash S$}{
\If{$(O_{S\cup\{ g^j\},G})$ returns {\sc true}}{
\If{$(LO_{S\cup \{ g^j\},G})$ is feasible}{

$S_{k+1}^i\leftarrow${\em canonical representative of} $S\cup \{ g^j \}$

\If{$S_{k+1}^i$ does not belong to $\widetilde{\mathcal{L}}_{k+1}(G)$}{

$\widetilde{\mathcal{L}}_{k+1}(G)\leftarrow\widetilde{\mathcal{L}}_{k+1}(G)\cup \{S_{k+1}^i\}$   % \{S\cup \{ g^j \}\}$

$i\leftarrow{i+1}$
}
}
}
}
}
Save $\widetilde{\mathcal{L}}_{k+1}(G)$
}
\caption{Layered optimization-based orbitwise vertex generation}
\end{algorithm}

It should be noted that Algorithm~\ref{LOG} is a layered version of an algorithm given in \cite{DezaPournin2022}. It can be significantly improved into Algorithm~\ref{LOOG} by exploiting the structural properties of a zonotope $H_G$ as follows.

%%%%%%%%%%%%%%%%%%%%%%%%%%%%%%%%%%%%%%%%%%%%%
%\newpage
%%%%%%%%%%%%%%%%%%%%%%%%%%%%%%%%%%%%%%%%%%%%%%%

\begin{rem} Consider a zonotope $H_G$ with $m=|G|$ generators.
\begin{itemize}
\item[$(i)$] $H_G$ is centrally symmetric with respect to the point
$$
\sigma=\frac{1}{2}p(G)\mbox{.}
$$
The point $p(S)$ is a vertex of $H_G$ if and only if $p(G\backslash S)$ is a vertex of $H_G$. Thus, when considering an orbitwise generation of the vertices of $H_G$, we can assume without loss of generality that $|S|\leq \lfloor m/2 \rfloor$.
\item[$(ii)$] Assuming that $G$ is invariant under the action of a linear transformation group, as for example coordinate permutations, an orbitwise generation can be performed by replacing $\mathcal{L}_k(G)$ with the set $\widetilde{\mathcal{L}}_k(G)$ of all canonical representatives of the points from $\mathcal{L}_k(G)$. For coordinate permutations, $\widetilde{\mathcal{L}}_k(G)$ is the set of all the vertices of $\mathcal{L}_k(G)$ such that
$$
p_i(S)\leq p_{i+1}(S)
$$
for all integers $i$ satisfying $1\leq{i}<d$.
\item[$(iii)$] Assuming that an oracle $(O_{S,G})$ certifying that $p(S)$ is not a vertex is available and computationally more efficient than $(LO_{S,G})$, we can further speed the algorithm up by calling $(O_{S,G})$ before calling $(LO_{S,G})$. Typically, $(O_{S,G})$ is a heuristic that returns {\sc false}  if  $(O_{S,G})$ is able to show that theres exists a subset $T$ of $G$ distinct from $S$ such that $p(S)=p(T)$.  Thus, $p(S)$ admits two distinct decompositions into a subsum of $G$ and therefore, it cannot be a vertex of $H_G$. If that oracle is able to detect most of the subsums of generators of $H_G$ that do not form a vertex of $H_G$, this results in a significant speedup.
\end{itemize}
\end{rem}

Observe that, in Line 7 of Algorithm~\ref{LOOG}, the subset $S^i_{k+1}$ of $G$ added into $\widetilde{\mathcal{L}}_{k+1}(G)$, should be the one such that $p(S^i_{k+1})$ is 
the canonical  representative in the orbit of $p(S\cup\{g^j\})$ under the action of the chosen group. As was the case with Algorithm~\ref{LOG}, only two consecutive layers need to be kept in the memory by Algorithm~\ref{LOOG}. For instance, layer $\widetilde{\mathcal{L}}_k(G)$ can be deleted from memory in Line 17. As we shall see in Section~\ref{edge-gen}, that layered optimization-based vertex generation of $H_G$ also allows for the determination of all the edges of $H_G$.

%%%%%%%%%%%%%%%%%%%%%%%%%%%%%%%%%%%%%%%%%%%%%%%%%%%%%%%%%%%%%%%%%%%%%%%%%%%%%%%%%%%%%%%
\section{Generating the vertices of the White Whale}\label{sec:whitewhale}

We first recall a few results concerning the White Whale. Using the notations of~\cite{DezaManoussakisOnn2018,DezaPourninRakotonarivo2021}, the White Whale is the primitive zonotope $H_{\infty}^+(d,1)$ defined as the Minkowski sum of the $2^d-1$ non-zero $0/1$-valued $d$-dimensional vectors. Let us denote by $a(d)$ the number of vertices of $H_{\infty}^+(d,1)$.  For example $H_{\infty}^+(3,1)$ is the zonotope with $a(3)=32$ vertices shown in Figure~\ref{Fig_H3}. Its seven generators are the vectors $(1,0,0)$, $(0,1,0)$, $(0,0,1)$, $(0,1,1)$, $(1,0,1),(1,1,0)$, and $(1,1,1)$. The central arrangement associated to $H_{\infty}^+(d,1)$, the $d$-dimensional resonance arrangement is denoted by $\mathcal{R}_d$, see~\cite{GutekunstMeszarosPetersen2019} and references therein.

\begin{figure}[b]
\begin{centering}
\includegraphics[scale=1]{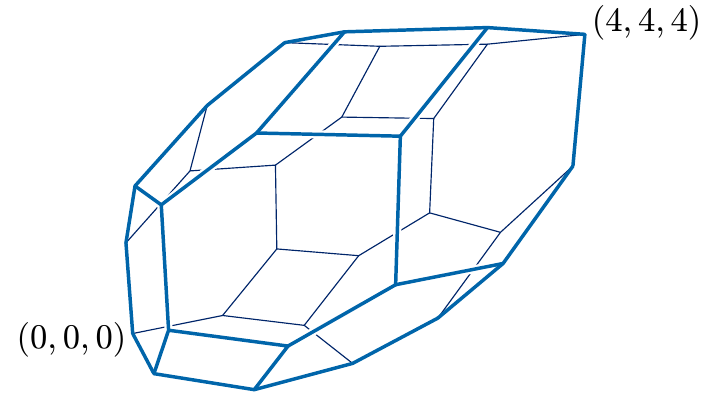}
\caption{The 3-dimensional White Whale $H_{\infty}^+(3,1)$.}\label{Fig_H3}
\end{centering}
\end{figure}

\begin{prop}\label{combi}
The White Whale  $H_{\infty}^+(d,1)$ has the following properties.
\begin{itemize}
\item[$(i)$] $H_{\infty}^+(d,1)$ is invariant under the symmetries of $\mathbb{R}^d$ that consist in permuting coordinates, see \cite{DezaManoussakisOnn2018}. 
\item[$(ii)$] $H_{\infty}^+(d,1)$ is contained in the hypercube $ [0,2^{d-1}]^d$ and the intersection of $H_{\infty}^+(d,1)$ with any facet of that hypercube coincides, up to translation and rotation with $H_{\infty}^+(d-1,1)$, see \cite{DezaManoussakisOnn2018}.
\item[$(iii)$] The number of vertices $a(d)$ of $H_{\infty}^+(d,1)$ is an even multiple of $d+1$, and satisfies (see \cite{DezaPourninRakotonarivo2021,GutekunstMeszarosPetersen2019,Wang2013})
$$
\frac{d+1}{2^{d+1}}2^{d^2(1-10/\ln d)}\leq a(d)\leq  \frac{d+4}{2^{3(d-1)}}2^{d^2}\mbox{.}
$$
\end{itemize}
\end{prop}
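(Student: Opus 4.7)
The plan is as follows. Parts (i) and (ii) follow from direct inspection of the generator set $G = \{0,1\}^d \setminus \{0\}$, and the bounds in (iii) are imported from the cited references, leaving the divisibility statement as the main point requiring an explicit argument.

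For (i), the set $G$ is manifestly closed under coordinate permutations (permuting the coordinates of a nonzero $0/1$-vector yields another nonzero $0/1$-vector), and since a Minkowski sum is determined by its multiset of summands, any coordinate permutation sends $H_G$ to itself. For the containment part of (ii), I would compute coordinate-wise: the sum of the $i$-th coordinates over all $2^d-1$ generators equals the number of nonzero $0/1$-vectors with a $1$ in position $i$, namely $2^{d-1}$, so $H_G \subseteq [0,2^{d-1}]^d$. For the facet identification, the facet of the hypercube defined by $x_d = 0$ is supported by the linear functional $-e_d$, so the face of $H_G$ it carries consists precisely of subsums using only those generators with zero $d$-th coordinate, i.e.\ the $2^{d-1}-1$ nonzero $0/1$-vectors supported on the first $d-1$ coordinates; projecting onto those coordinates recovers $H_\infty^+(d-1,1)$. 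The remaining $2d-1$ facets of the hypercube are then handled by combining (i) with the central symmetry of $H_G$ about $\sigma = p(G)/2$.

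For (iii), both the upper and the lower bound are quoted directly from the cited works. The divisibility claim splits into a factor of $2$ and a factor of $d+1$. Divisibility by $2$ follows from central symmetry: the involution $v \mapsto p(G) - v$ permutes the vertex set of $H_G$, and its only possible fixed point would be $\sigma = (2^{d-2},\ldots,2^{d-2})$, which lies in the relative interior of $H_G$ and thus cannot be a vertex; hence the involution is fixed-point-free and pairs up the vertices.

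The factor of $d+1$ is the step I expect to be the main obstacle. My plan is to exhibit an action of a group of order $d+1$ on the vertex set of $H_G$, equivalently on the chambers of $\mathcal{R}_d$, whose orbits all have size $d+1$. The natural route is through the known bijections between chambers of the resonance arrangement and combinatorial data on $d+1$ objects---maximal unbalanced families, generic preference profiles, or linear threshold codes---as in \cite{GutekunstMeszarosPetersen2019,KamiyaTakemuraTerao2011}. The symmetric group $S_{d+1}$ acts on such data by relabeling the $d+1$ objects, and a cyclic subgroup of order $d+1$ supplies the candidate action; the hard part will be verifying freeness, i.e.\ that no chamber is stabilized by a nontrivial cyclic relabeling, which will require a genericity argument on the combinatorial data classifying the chamber. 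Alternatively, one may simply inherit the divisibility directly from \cite{DezaPourninRakotonarivo2021}, where this factorization is observed as a consequence of the combinatorial description of $a(d)$ used to derive the stated lower bound.
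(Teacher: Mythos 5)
The paper does not actually prove Proposition~\ref{combi}: all three assertions are recalled from the literature, with $(i)$ and $(ii)$ attributed to \cite{DezaManoussakisOnn2018} and the divisibility and bounds in $(iii)$ to \cite{DezaPourninRakotonarivo2021,GutekunstMeszarosPetersen2019,Wang2013}. Your proposal therefore goes further than the text: your arguments for $(i)$, for the containment $H_\infty^+(d,1)\subseteq[0,2^{d-1}]^d$, for the identification of the face on $x_d=0$ as the zonotope generated by the $2^{d-1}-1$ generators with last coordinate zero (hence a copy of $H_\infty^+(d-1,1)$), for transporting this to the other $2d-1$ hypercube facets via coordinate permutations and central symmetry, and for evenness via the fixed-point-free involution $v\mapsto p(G_d)-v$ are all correct and elementary, and they buy a self-contained verification of everything except the bounds and the factor $d+1$, which you, like the paper, ultimately delegate to the cited references. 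For the factor $d+1$ your route (the $S_{d+1}$ relabeling symmetry of $\mathcal{R}_d$, obtained by viewing the arrangement inside the hyperplane $x_1+\dots+x_{d+1}=0$ of $\mathbb{R}^{d+1}$, and a cyclic subgroup of order $d+1$) is the right one, and the freeness you flag as the hard step can in fact be completed: if a nontrivial power of the cyclic shift stabilized a chamber, averaging a point of the chamber over that cyclic subgroup would produce a point of the (open, convex) chamber whose coordinates are constant on the orbits of the permutation; choosing one index from each orbit then gives a proper nonempty subset with vanishing coordinate sum, contradicting that the point avoids every hyperplane of the arrangement.

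One genuine caveat in your plan: the statement asserts that $a(d)$ is an \emph{even multiple} of $d+1$, i.e.\ divisible by $2(d+1)$. Evenness together with a free $C_{d+1}$-action only yields divisibility by $\operatorname{lcm}(2,d+1)$, which is strictly smaller than $2(d+1)$ whenever $d+1$ is even (note $d=3$, where $a(3)=32$ and $2(d+1)=8$, shows the stronger statement is the intended one). To get the full factor you would need a free action of a group of order $2(d+1)$, for instance the one generated by the cyclic shift together with the central symmetry, whose freeness follows by the same averaging argument applied to elements of the form $-\tau$; alternatively, your fallback of quoting \cite{DezaPourninRakotonarivo2021} is exactly what the paper itself does, so the proposal is acceptable as written provided you treat the whole divisibility claim, not just the bounds, as imported.
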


In view of assertion $(i)$ in the statement of Proposition~\ref{combi}, we call a vertex $v$ of $H_{\infty}^+(d,1)$ \emph{canonical} when $v_i\leq v_{i+1}$ for $1\leq{i}\leq{d-1}$. The values of $a(d)$ have been determined up to $d=9$ as recorded in sequence A034997 of the On-Line Encyclopedia of Integer Sequences~\cite{OEIS}. We report these values in Table~\ref{Table_a(d)} along with the references where they are obtained. The authors of the references where $a(d)$ is determined via the characteristic polynomial of $\mathcal{A}_{G}$; that is by counting, are indicated using  {\sc capital letters}.

\begin{rem}
By Proposition~\ref{combi}, $a(d)$ is even and a multiple of $d+1$. Interestingly, when $d$ is equal to $5$, we obtain from Table~\ref{Table_a(d)} that
$$
\frac{a(d)}{2(d+1)}=941\mbox{,}
$$
which is a prime number.
\end{rem}

If the aim is to count but not to generate the vertices of $H_{\infty}^+(d,1)$, the approach proposed by Kamiya, Takemura, and Terao~\cite{KamiyaTakemuraTerao2011} can be applied. It was enhanced by Chroman and Singhal \cite{ChromanSinghal2021} who determined the characteristic polynomial of the $9$-dimensional resonance arrangement $\mathcal{R}_9$. In addition, a formula for Betti numbers $b_2(\mathcal{R}_d)$ and $b_3(\mathcal{R}_d)$ has been given by K{\"u}hne~\cite{Kuhne2020}, and a formula for $b_4(\mathcal{R}_d)$ by Chroman and Singhal~\cite{ChromanSinghal2021}.  Pursuing the characteristic polynomial approach, Brysiewicz, Eble, and K{\"u}hne~\cite{Kuhne2021} computed the Betti numbers for a number of  hyperplane arrangements with large symmetry groups and, independently and concurrently confirmed the value of $a(9)$.

\begin{table}[t]
$$
\begin{array}{c|c|c}
d & a(d)  &  \mbox{References} \\
\hline
% 1 & 2 & \mbox{Evans~\cite{Evans1995} (1992)} \\
2 & 6 & \mbox{{Evans}~\cite{Evans1995} (1995)} \\
3 & 32 & \mbox{{Evans}~\cite{Evans1995} (1995)} \\
4 & 370 &  \mbox{{Evans}~\cite{Evans1995} (1995), {van Eijck}~\cite{vanEijck1995} (1995)} \\
5 & 11\,292 &  \mbox{{Evans}~\cite{Evans1995} (1995), {van Eijck}~\cite{vanEijck1995} (1995)} \\
6 & 1\,066\,044 &  \mbox{{Evans}~\cite{Evans1995} (1995), {van Eijck}~\cite{vanEijck1995} (1995)} \\
7 & 347\,326\,352 & \mbox{{van Eijck}~\cite{vanEijck1995} (1995), {\sc Kamiya et al.}~\cite{KamiyaTakemuraTerao2011} (2011)} \\
8 & 419\,172\,756\,930 &  \mbox{{Evans}~\cite{OEIS} (2011)} \\
9 & 1\,955\,230\,985\,997\,140 &  \mbox{{\sc Brysiewicz, Eble, and K{\"u}hne}~\cite{Kuhne2021} (2021)},\\ 
							& &  \mbox{{\sc Chroman and Singhal}~\cite{ChromanSinghal2021} (2021)} \\
\end{array}
$$
\caption{Generating and {\sc counting} the vertices of the White Whale.}\label{Table_a(d)}
\end{table}

From now on, we denote by $G_d$ the set of the $2^d-1$ generators of $H_{\infty}^+(d,1)$. Throughout the article, we will illustrate the proposed methods using the following family of vertices. When $1\leq{k}\leq{d-1}$, denote by $U_d^k$ the set of all the $0/1$-valued $d$-dimensional vectors whose last coordinate is equal to $1$ and that admit at most $k$ non-zero coordinates. For example, when $k=2$,
$$
U_d^2=
\left\{
\left[
\begin{array}{c}
1\\
0\\
0\\
\vdots\\
0\\
1\\
\end{array}\right]\!\mbox{, }
\left[
\begin{array}{c}
0\\
1\\
0\\
\vdots\\
0\\
1\\
\end{array}\right]\!\mbox{, }\ldots\mbox{, }
\left[
\begin{array}{c}
0\\
0\\
\vdots\\
0\\
1\\
1\\
\end{array}\right]\!\mbox{, }
\left[
\begin{array}{c}
0\\
0\\
0\\
\vdots\\
0\\
1\\
\end{array}\right]
\right\}\!\mbox{,}
$$
and $p(U_d^2)$ is equal to $(1,\dots,1,d)$. In general,
$$
p(U_d^k)=\left(\sum_{i=0}^{k-2}{d-2 \choose i},\dots,\sum_{i=0}^{k-2}{d-2 \choose i},\sum_{i=0}^{k-1}{d-1 \choose i}\right)\mbox{.}
$$

Proposition~\ref{sommet} illustrates how $(LO_{S,G_d})$ can be used to identify the vertices of the White Whale in any dimension in the special case of $p(U_d^k)$. 

\begin{prop}\label{sommet}
The point $p(U_d^k)$ is a canonical vertex of $H_\infty^+(d,1)$.
\end{prop}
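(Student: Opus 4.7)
The plan is to exhibit a certificate vector $c$ witnessing that the system $(LO_{U_d^k,G_d})$ is feasible. Because both the problem and the alleged vertex $p(U_d^k)$ are symmetric under permutations of the first $d-1$ coordinates, I look for $c$ of the form $c=(a,a,\ldots,a,b)$ with two real parameters $a$ and $b$.

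The generators $G_d$ then fall into three easy-to-handle classes, indexed by the number $j$ of $1$s appearing among the first $d-1$ coordinates of a generator $g$ and by the value $\varepsilon\in\{0,1\}$ of its last coordinate. One has $c^Tg = ja+\varepsilon b$, so the inequalities of $(LO_{U_d^k,G_d})$ become:
\begin{itemize}
\item[(a)] $ja+b\geq 1$ for $0\leq j\leq k-1$ (the elements of $U_d^k$);
\item[(b)] $ja\leq -1$ for $1\leq j\leq d-1$ (generators with $\varepsilon=0$);
\item[(c)] $ja+b\leq -1$ for $k\leq j\leq d-1$ (generators with $\varepsilon=1$ and weight $>k$).
\end{itemize}
Condition~(b) forces $a\leq -1$; imposing (a) at $j=k-1$ and (c) at $j=k$ then forces $a\leq -2$, after which the choice $a=-2$, $b=2k-1$ saturates both extremal constraints simultaneously. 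The rest of the verification is a one-line monotonicity check in $j$ for each of the three classes.

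With the certificate $c=(-2,\ldots,-2,2k-1)$ in hand, $p(U_d^k)$ is a vertex of $H_\infty^+(d,1)$ by the discussion in Section~\ref{sec:zonotope}. Canonicity is then immediate: by construction, $U_d^k$ is invariant under any permutation of the first $d-1$ coordinates, hence the first $d-1$ entries of $p(U_d^k)$ coincide; and since every vector in $U_d^k$ has last coordinate $1$, the last entry of $p(U_d^k)$ equals $|U_d^k|$, which is at least as large as any of the preceding entries.

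The only genuinely nonroutine step is guessing the form of $c$, but once the ansatz $c=(a,\ldots,a,b)$ is made the two parameters are essentially determined by the two boundary cases $j=k-1$ and $j=k$, so I do not anticipate any real obstacle. The explicit formula for $p(U_d^k)$ in terms of partial binomial sums is not needed for the proof itself; it only serves to make the canonicity check concrete, which can alternatively be done combinatorially as indicated above.
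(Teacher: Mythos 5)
Your proof is correct and follows essentially the same route as the paper: you exhibit the same certificate $c=(-2,\dots,-2,2k-1)$, verify the system $(LO_{U_d^k,G_d})$ by splitting the generators according to their last coordinate and the number of nonzero entries among the first $d-1$, and conclude canonicity from the fact that the coordinates of $p(U_d^k)$ are nondecreasing. The ansatz-based derivation of $c$ is a nice piece of added motivation but does not change the substance of the argument.
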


\begin{proof}
As the coordinates of $p(U_d^k)$ are nondecreasing, if this point is a vertex of $H_\infty^+(d,1)$, it must be canonical. We consider  the $d$-dimensional vector
$$
c=(-2,\dots,-2,2k-1)
$$
and use $(LO_{S,G_d})$ with $S=U_d^k)$ to show that $p(U_d^k)$ is indeed a vertex of $H_\infty^+(d,1)$.
%
% $ of $\mathbb{R}^d$ whose last coordinate is $2k-1$ and whose all other coordinates are equal to $-2$. By construction, 
If $g$ is a vector in $U_d^k$, then $c^Tg\geq1$. Now if $g$ belongs to $G_d\mathord{\setminus}U_d^k$, then either $g_d=0$ or at least $k$ of its $d-1$ first coordinates are non-zero. In the former case, $c^Tg\leq-2$ because $g$ has at least one non-zero coordinate.

In the latter case,
$$
c_1g_1+\dots+c_{d-1}g_{d-1}\leq-2
%\sum_{i=1}^{d-1}c_ig_i\leq-2k
$$
and $c_dg_d=2k-1$. Hence $c^Tg\leq-1$ and the result follows.
\end{proof}

Observe that the last coordinate of $p(U_d^k)$ is precisely the number $l$ of elements of $U_d^k$ and thus $p(U_d^k)$ belongs to $\widetilde{\mathcal{L}}_l(G)$. Using a similar approach as in Proposition~\ref{sommet}, one can obtain other families of canonical vertices of the White Whale. For instance, according to Proposition~\ref{sommets}, the sum of the generators belonging to the subset $W_d^k$ of $G_d$ made up of the $2^k-1$ vectors whose first $d-k$ coordinates are equal to zero is a vertex of $H_\infty^+(d,1)$.

\begin{prop}\label{sommets}
$\:$
% The following assertions hold for any $d$.
\begin{itemize}
\item[$(i)$] % For any integer $k$ such that $1\leq{k}\leq{d}$, 
The point $p(W_d^k)=(0,\dots,0,2^{k-1},\dots,2^{k-1})$ whose first $d-k$ coordinates are equal to $0$ and whose last $k$ coordinates are equal to $2^{k-1}$ is a canonical vertex of $H_\infty^+(d,1)$ that belongs to $\widetilde{\mathcal{L}}_{2^{k}-1}(G_d)$.
\item[$(ii)$]
The only non-zero $0/1$-valued canonical vertex of $H_\infty^+(d,1)$  is $(0,\dots,0,1)$ and therefore, $\widetilde{\mathcal{L}}_{1}(G_d)=\{(0,\dots,0,1)\}$.
\end{itemize}
\end{prop}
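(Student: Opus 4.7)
The plan is to copy the $(LO_{S,G_d})$ strategy used in Proposition~\ref{sommet} for part $(i)$, and to exploit the uniqueness of subsum representations of zonotope vertices for part $(ii)$.

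For part $(i)$, I would first verify directly that $p(W_d^k)$ has the claimed form. The set $W_d^k$ enumerates all $2^k-1$ non-zero $0/1$-valued vectors supported on the last $k$ coordinates, so by a parity count each of those $k$ coordinates equals $1$ in exactly $2^{k-1}$ of them. Hence $p(W_d^k)=(0,\ldots,0,2^{k-1},\ldots,2^{k-1})$, whose coordinates are already nondecreasing, so the point is canonical as soon as it is shown to be a vertex. The natural certificate to try is a cost vector
$$
c=(\underbrace{-\lambda,\ldots,-\lambda}_{d-k},\underbrace{1,\ldots,1}_{k})
$$
with $\lambda$ a positive integer to be calibrated. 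For $g\in W_d^k$, $c^Tg$ equals the number of $1$s among the last $k$ coordinates of $g$, which is at least $1$. For $g\in G_d\setminus W_d^k$, at least one of the first $d-k$ coordinates of $g$ is a $1$, so $c^Tg\leq -\lambda+k$; choosing $\lambda=k+1$ makes this at most $-1$, which certifies $(LO_{W_d^k,G_d})$. Since $|W_d^k|=2^k-1$, the vertex lies in $\widetilde{\mathcal{L}}_{2^k-1}(G_d)$.

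For part $(ii)$, any canonical $0/1$-valued vector has the form $(0,\ldots,0,1,\ldots,1)$ with some number $k\geq 1$ of trailing $1$s. The case $k=1$ is the point $(0,\ldots,0,1)=p(W_d^1)$, already known to be a vertex by part $(i)$. For $k\geq 2$, I would exhibit two distinct subsets of $G_d$ summing to the vector: the singleton consisting of the vector itself (which is a generator, being a non-zero $0/1$ vector) and the set of the $k$ trailing standard basis vectors $e_{d-k+1},\ldots,e_d$. These subsets have different cardinalities, contradicting the uniqueness of the subsum representation of a zonotope vertex recalled in Section~\ref{sec:zonotope}, so the vector is not a vertex. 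The equality $\widetilde{\mathcal{L}}_1(G_d)=\{(0,\ldots,0,1)\}$ then follows because every element of $\widetilde{\mathcal{L}}_1(G_d)$ is the canonical form of a single-generator vertex, hence a non-zero canonical $0/1$-valued vertex.

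The only subtlety I foresee is calibrating the constant $\lambda$ in the cost vector for $(i)$: the tight case is a generator in $G_d\setminus W_d^k$ that has exactly one nonzero entry among its first $d-k$ coordinates and all $k$ last coordinates equal to $1$, which is precisely what forces $\lambda\geq k+1$. Everything else is routine bookkeeping.
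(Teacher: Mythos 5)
Your proof is correct and follows essentially the same route as the paper: a feasibility certificate for $(LO_{W_d^k,G_d})$ in part $(i)$, and the non-uniqueness of the subsum decomposition of a $0/1$-valued point with at least two nonzero coordinates in part $(ii)$, together with the $k=1$ case of $(i)$ to identify $\widetilde{\mathcal{L}}_1(G_d)$. If anything, your cost vector $c=(-(k+1),\dots,-(k+1),1,\dots,1)$ is tighter than the paper's choice (zeros on the first $d-k$ coordinates, ones on the last $k$), since yours satisfies the strict inequalities $c^Tg\leq-1$ for every $g\in G_d\setminus W_d^k$ literally, while the paper's vector only isolates the face containing $p(W_d^k)$ and implicitly relies on exactly the kind of negative perturbation you carried out.
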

\begin{proof}
In order to prove assertion $(i)$, consider the vector $c$ whose first $d-k$ coordinates are equal to $0$ and whose last $k$ coordinates are $1$. It suffices so use $(LO_{S,G_d})$ with $S=W_d^k$ to show that $p(W_d^k)$ is a vertex of $H_\infty^+(d,1)$. As the coordinates of this point are nondecreasing, it is a canonical vertex of $H_\infty^+(d,1)$. Observing that there are exactly $2^{k}-1$ vectors $g$ in $G_d$ such that $c\mathord{\cdot}g>0$ further shows that this vertex belongs to $\widetilde{\mathcal{L}}_{2^{k}-1}(G_d)$.

Observe that taking $k=1$ in assertion $(i)$ proves that $(0,\dots,0,1)$ is a canonical vertex of $H_\infty^+(d,1)$. In order to prove assertion $(ii)$ recall that a vertex of $H_\infty^+(d,1)$ is the sum of a unique subset of $G_d$. However, any point from $\{0,1\}^d$ with at least two non-zero coordinates can be written as the sum of several different subsets of $G_d$ (as for instance the subset that contains the point itself, and a subset that contains several points with only one non-zero coordinate). 
\end{proof}

Lemmas \ref{111} to \ref{edge} below, where ${\bf 1}$ denotes the generator $(1,\dots,1)$, are building blocks for an oracle that efficiently identifies that $p(S)$ is not a vertex of $H_{\infty}^+(d,1)$ for most subsets $S$ of $G_d$, by providing a computationally easy to check necessary condition for being a vertex of $H_{\infty}^+(d,1)$.
\begin{lem}\label{111}
Consider a subset $S$ of $G_d$ such that $p(S)$ is a vertex of $H_{\infty}^+(d,1)$. The vector ${\bf 1}$ belongs to $S$ if and only if $|S|\geq 2^{d-1}$.
\begin{proof}
The $2^d-2$ vectors in $G_d\backslash\{{\bf 1}\}$ can be partitioned into $2^{d-1}-1$ unordered pairs $\{g^i,\bar{g}^i\}$ such that $g^i+\bar{g}^i={\bf 1}$. Assume that ${\bf 1}$ belongs to $S$ and that, for some $i$, neither of the vectors in the pair $\{g^i,\bar{g}^i\}$ belong to $S$, then
$$
p(S)= p([S\mathord{\setminus}\{{\bf 1}\}]\cup\{ g^i, \bar{g}^i\})\mbox{.}
$$

Therefore, $p(S)$ admits two distinct decompositions, and thus can not be a vertex. It follows that, in addition to ${\bf 1}$, $S$ contains at least $2^{d-1}-1$ generators; that is $|S|\geq 2^{d-1}$.  Since $p(S)$ is a vertex of $H_{\infty}^+(d,1)$ if and only if $p(G_d\mathord{\setminus}S)$ is a vertex of $H_{\infty}^+(d,1)$, ${\bf 1}\in S$ if and only if $|S|\geq 2^{d-1}$.
\end{proof}
\end{lem}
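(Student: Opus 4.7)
My plan is to exploit the fact that for a zonotope $H_G$ with all generators distinct, a point $p(S)$ is a vertex if and only if it admits a \emph{unique} expression as a subsum of $G$. So I would prove the lemma by producing, whenever the claimed condition fails, a second subset of $G_d$ summing to $p(S)$.

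The key combinatorial observation is a pairing on the nonzero generators: the map $g \mapsto \mathbf{1}-g$ is an involution on $G_d \setminus \{\mathbf{1}\}$ without fixed points (since $\mathbf{1}/2 \notin \{0,1\}^d$), and it partitions $G_d\setminus\{\mathbf{1}\}$ into $(2^d-2)/2 = 2^{d-1}-1$ complementary pairs $\{g,\bar g\}$ with $g+\bar g = \mathbf{1}$. This is the mechanism that turns $\mathbf{1}$ into a ``swap'' between representations.

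For the forward direction, suppose $\mathbf{1}\in S$ and assume toward contradiction that some pair $\{g,\bar g\}$ lies entirely outside $S$. Then the set $S' = (S\setminus\{\mathbf{1}\})\cup\{g,\bar g\}$ is a valid subset of $G_d$, distinct from $S$, with $p(S')=p(S)$. Since two different subsets cannot realize the same vertex, this contradicts the hypothesis that $p(S)$ is a vertex. Hence $S$ must meet every one of the $2^{d-1}-1$ pairs in at least one generator, giving $|S|\ge 1+(2^{d-1}-1) = 2^{d-1}$.

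For the converse, the cleanest route is to invoke the central symmetry already recorded in Remark 2.2(i): $p(S)$ is a vertex of $H_\infty^+(d,1)$ if and only if $p(G_d\setminus S)$ is. Applying the forward direction to the complement $G_d\setminus S$, which has cardinality $(2^d-1)-|S|$, and taking contrapositives, we get that if $|S|\ge 2^{d-1}$ then $|G_d\setminus S|\le 2^{d-1}-1 < 2^{d-1}$, so $\mathbf{1}\notin G_d\setminus S$, i.e.\ $\mathbf{1}\in S$. The only subtlety I anticipate is the off-by-one bookkeeping in the pair count and in the complement's size; once the involution $g\mapsto\mathbf{1}-g$ is written down, everything else is essentially forced.
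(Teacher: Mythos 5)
Your proposal is correct and follows essentially the same argument as the paper: the fixed-point-free involution $g\mapsto\mathbf{1}-g$ pairing $G_d\setminus\{\mathbf{1}\}$ into $2^{d-1}-1$ pairs, the two-decomposition contradiction for the forward direction, and central symmetry (passing to $G_d\setminus S$) for the converse. Your spelled-out complement count in the converse is just a more explicit rendering of the paper's one-line appeal to the same symmetry.
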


\begin{lem}\label{edge111}
%Let $(u,v)$ be two adjacent vertices of $H_{\infty}^+(d,1)$ such that $[u,v]={\bf 1}$. Assume without loss of generality that $v=u+{\bf 1}$,  then $u\in \mathcal{L}_{2^{d-1}}(G_d)$. 
%In others words, 
Any edge of the $d$-dimensional White Whale that coincides, up to translation, with the line segment between the origin of $\mathbb{R}^d$ and the point ${\bf 1}$ connects a vertex that is the sum of exactly $2^{d-1}-1$ generators to a vertex that is the sum of exactly $2^{d-1}$ generators.
\begin{proof}
This is a direct consequence of Lemma~\ref{111}.
\end{proof}
\end{lem}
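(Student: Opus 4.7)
The plan is to exploit the basic fact, recalled earlier in the paper, that every edge of a zonotope is, up to translation, the segment between the origin and some generator, together with the characterization of vertices containing ${\bf 1}$ supplied by Lemma~\ref{111}. Concretely, an edge of $H_\infty^+(d,1)$ parallel to ${\bf 1}$ must join two vertices of the form $p(S)$ and $p(S\cup\{{\bf 1}\})$, where $S$ is a subset of $G_d$ that does not contain ${\bf 1}$ and both sums are vertices of $H_\infty^+(d,1)$.

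Next I would invoke Lemma~\ref{111} twice, once for each endpoint of the edge. Applied to the vertex $p(S)$, for which ${\bf 1}\notin S$, the lemma forces $|S|<2^{d-1}$, hence $|S|\leq 2^{d-1}-1$. Applied to the vertex $p(S\cup\{{\bf 1}\})$, for which ${\bf 1}$ does belong to the indexing set, the lemma forces $|S\cup\{{\bf 1}\}|\geq 2^{d-1}$, equivalently $|S|\geq 2^{d-1}-1$.

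Combining these two inequalities yields $|S|=2^{d-1}-1$ and $|S\cup\{{\bf 1}\}|=2^{d-1}$, which is precisely the conclusion of the lemma. There is essentially no obstacle here: the whole argument is a direct pinching of $|S|$ between two bounds that differ by exactly one, and the only substantive ingredient is the characterization already established in Lemma~\ref{111}. The only minor point worth spelling out, if desired, is the elementary zonotope fact that the two endpoints of an edge parallel to a generator $g$ differ exactly by $g$; this is standard and can be cited from any of the references on zonotopes given in the introduction.
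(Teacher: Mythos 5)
Your argument is correct and is exactly the reasoning the paper leaves implicit when it says the lemma is ``a direct consequence of Lemma~\ref{111}'': you apply that lemma to both endpoints $p(S)$ and $p(S\cup\{{\bf 1}\})$ and pinch $|S|$ to $2^{d-1}-1$. The only difference is that you spell out the standard zonotope fact about endpoints of an edge parallel to a generator, which the paper takes for granted.
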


When $k=d-1$, assertion $(i)$ of Proposition~\ref{sommets} tells that the point
$$
p(W_d^{d-1})=(0,2^{d-2},\dots,2^{d-2})
$$
is a canonical vertex that belongs to $\mathcal{L}_{2^{d-1}-1}(G_d)$, which provides an illustration of Lemma~\ref{edge111} with the edge of $H_\infty^+(d,1)$ whose endpoints are $p(W_d^{d-1})$ and $p(W_d^{d-1}\cup\{ {\bf 1}\})$. For example, when $d=3$, the segment with vertices $(0,2,2)$ and $(1,3,3)$ is an edge of the $H_\infty^+(3,1)$ as shown in Figure~\ref{Fig_H3-L}.

\begin{lem}\label{barg}
Consider a subset $S$ of $G_d$ such that $p(S)$ is a vertex of $H_{\infty}^+(d,1)$ and a vector $g^j$ in $S$. If $|S|< 2^{d-1}$, then ${\bf 1}-g^j$ does not belong to $S$.
\begin{proof}
Assume that $|S|< 2^{d-1}$. By Lemma~\ref{111}, $S$ cannot contain ${\bf 1}$. Assume that both  $g^j$ and ${\bf 1}-g^j$ belong to $S$. In this case, 
$$
p(S)= p([S\backslash \{g^j,{\bf 1}-g^j\}]\cup\{{\bf 1}\})\mbox{}
$$
and $p(S)$ would admit two distinct decompositions, a contradiction.
\end{proof}
\end{lem}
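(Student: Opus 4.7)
The plan is a short contradiction argument that leverages Lemma~\ref{111} together with the uniqueness of vertex decompositions recalled after the definition of $p(S)$. First I would observe that since $|S|<2^{d-1}$, the contrapositive direction of Lemma~\ref{111} forces ${\bf 1}\notin S$. In particular, the vector $g^j\in S$ is distinct from ${\bf 1}$, so the complementary vector ${\bf 1}-g^j$ is a non-zero $0/1$\nobreakdash-valued $d$\nobreakdash-dimensional vector and hence itself a generator in $G_d$.

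Next I would argue by contradiction: assume that ${\bf 1}-g^j$ also belongs to $S$. Since $g^j$ and ${\bf 1}-g^j$ are two distinct generators (they are distinct because ${\bf 1}-g^j=g^j$ would give $2g^j={\bf 1}$, impossible for a $0/1$\nobreakdash-vector), consider the subset
$$
S'=\bigl(S\setminus\{g^j,{\bf 1}-g^j\}\bigr)\cup\{{\bf 1}\}\mbox{.}
$$
Because $g^j+({\bf 1}-g^j)={\bf 1}$, we have $p(S')=p(S)$. Moreover $S\neq S'$: indeed ${\bf 1}\in S'$ while, as established above, ${\bf 1}\notin S$. This exhibits two distinct decompositions of $p(S)$ as a subsum of generators of $H_\infty^+(d,1)$, which contradicts the uniqueness of the subset of $G_d$ summing to a vertex of $H_\infty^+(d,1)$. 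Hence ${\bf 1}-g^j\notin S$, as desired.

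There is essentially no hard step here; the only thing to be careful about is invoking the right direction of Lemma~\ref{111} to rule out ${\bf 1}\in S$ from the size hypothesis $|S|<2^{d-1}$, and then correctly checking that the rewritten subset $S'$ is genuinely different from $S$ so that the uniqueness of decomposition is violated. The argument is a direct structural analogue of the pairing trick used inside the proof of Lemma~\ref{111}, but applied to the single exchange $\{g^j,{\bf 1}-g^j\}\leftrightarrow\{{\bf 1}\}$ rather than to a global pairing of $G_d\setminus\{{\bf 1}\}$.
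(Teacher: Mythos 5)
Your proposal is correct and follows essentially the same route as the paper: use Lemma~\ref{111} to rule out ${\bf 1}\in S$, then exchange the pair $\{g^j,{\bf 1}-g^j\}$ for ${\bf 1}$ to produce a second decomposition of $p(S)$, contradicting the uniqueness of the generating subset of a vertex. Your extra checks (that ${\bf 1}-g^j$ is a genuine generator distinct from $g^j$, and that the rewritten set differs from $S$ because it contains ${\bf 1}$) merely make explicit what the paper leaves implicit.
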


Proposition~\ref{sommet}, Lemma~\ref{sommets}, and Lemma~\ref{edge111} are illustrated in Figure~\ref{Fig_H3-L} where the vertices of $H_\infty^+(d,1)$ contained in the layer $\mathcal{L}_{k}(G_d)$ are marked $\circ$ when $k$ is even and $\bullet$ when $k$ is odd. The marks of the canonical vertices of $H_\infty^+(d,1)$ are further circled, and the edges equal, up to translation, to the line segment whose endpoints are  the origin of $\mathbb{R}^d$ and the point ${\bf 1}$ are colored red.

\begin{figure}[t]
\begin{centering}
\includegraphics[scale=1]{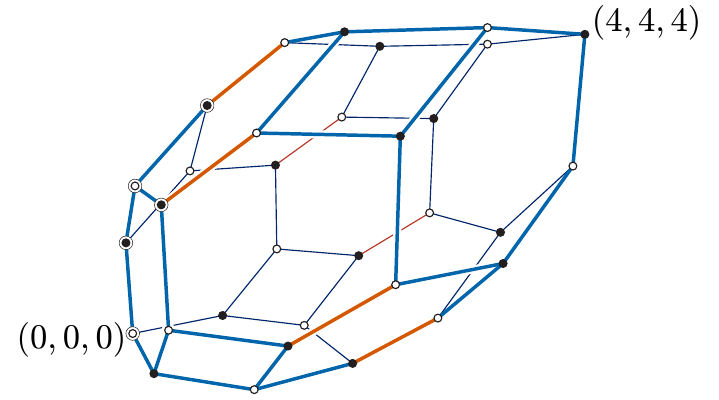}
\caption{The partition into eight layers of the vertex set of the $3$-dimensional White Whale $H_{\infty}^+(3,1)$.}\label{Fig_H3-L}
\end{centering}
\end{figure}

For a generator $g^j\in G_d$, let $\sigma(g^j)$ denote the {\em support} of $g^j$; that is the number of coordinates of $g^j$ that are equal to $1$. For any subset $S$ of $G_d$ and any vector $g^j$ in $G_d$, consider the following subset of $S$:
$$
S\langle g^j\rangle=\{g\in{S}:\mbox{ if }g_i^j=0\mbox{, then }g_i=0\mbox{ for }  1\leq{i}\leq{d} \}\mbox{,}
$$
or equivalently
$$
S\langle g^j\rangle=\{g\in{S}:g_i\wedge {g_i^j}=g_i\mbox{ for }  1\leq{i}\leq{d} \}\mbox{.}
$$

Lemma~\ref{edge} is a generalization of Lemma~\ref{edge111} that provides an easy to check necessary condition to be applied before calling $(LO_{S,G_d}$).

\begin{lem}\label{edge}
Consider a subset $S$ of $G_d$ such that $p(S)$ is a vertex of $H_{\infty}^+(d,1)$ and a vector $g^j$ contained in $G_d\mathord{\setminus}S$. If $|S\langle{g^j}\rangle|$ is not equal to $2^{\sigma(g^j)-1}-1$ then $p(S\cup\{g^j\})$ is not a vertex of $H_{\infty}^+(d,1)$.
 \begin{proof}
 The $2^{\sigma(g^j)}-2$ vectors in $G_d\langle g^j\rangle\mathord{\setminus}\{ g^j \}$ can be partitioned into $2^{\sigma(g^j)-1}-1$ unordered pairs  $\{g^l,\bar{g}^l\}$ such that $g^l+\bar{g}^l=g^j$.
If, for some $l$, neither of the vectors in the pair $\{g^l,\bar{g}^l\}$ belong to $S\langle{g^j}\rangle$, then
$$
p(S\cup\{g^j\})= p(S\cup\{ g^l,\bar{g}^l\})\mbox{.}
$$

In other words, $p(S\cup\{g^j\})$ can be obtained as the sums of two different subsets of $G_d$ and, therefore it cannot be a vertex of $H_\infty^+(d,1)$.

Now assume that, for some $l$, both $g^l$ and $\bar{g}^l$ belong to $S\langle{g^j}\rangle$. Then
$$
p(S)= p([S\mathord{\setminus}\{ g^l,\bar{g}^l\}]\cup\{g^{j}\})\mbox{.}
$$

It follows that $p(S)$ is obtained as the sums of two different subsets of $G_d$ and cannot be a vertex of $H_\infty^+(d,1)$, a contradiction.

This shows that, in order for $p(S\cup\{g^j\})$ to be a vertex of $H_\infty^+(d,1)$, it is necessary that $S\langle{g^j}\rangle$ contains exactly one vector from each of the $2^{\sigma(g^j)-1}-1$ unordered pairs  $\{g^l,\bar{g}^l\}$ of vectors such that $g^l+\bar{g}^l=g^j$, as desired.
 \end{proof}
\end{lem}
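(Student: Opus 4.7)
My plan is to mimic the pairing argument of Lemma~\ref{111}, but applied inside the ``sub-cube'' of $G_d$ carved out by the support of $g^j$. Concretely, I would first observe that the $2^{\sigma(g^j)}-2$ vectors of $G_d\langle g^j\rangle\mathord{\setminus}\{g^j\}$ can be partitioned into $2^{\sigma(g^j)-1}-1$ unordered pairs $\{g^l,\bar{g}^l\}$ satisfying $g^l+\bar{g}^l=g^j$. This is exactly the pairing used to prove Lemma~\ref{111}, transferred to the coordinates where $g^j$ equals $1$, with $g^j$ playing locally the role previously played by ${\bf 1}$.

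The heart of the argument is then to show that each such pair contributes exactly one vector to $S\langle g^j\rangle$, which gives $|S\langle g^j\rangle|=2^{\sigma(g^j)-1}-1$; the contrapositive is the statement of the lemma. I would split this into two cases. If some pair $\{g^l,\bar{g}^l\}$ has no element in $S\langle g^j\rangle$, then the identity $g^j=g^l+\bar{g}^l$ yields the alternative decomposition $p(S\cup\{g^j\})=p(S\cup\{g^l,\bar{g}^l\})$, so $p(S\cup\{g^j\})$ admits two distinct subsum representations and hence cannot be a vertex of $H_\infty^+(d,1)$. Conversely, if some pair lies entirely inside $S\langle g^j\rangle$, then $p(S)=p((S\mathord{\setminus}\{g^l,\bar{g}^l\})\cup\{g^j\})$, contradicting the hypothesis that $p(S)$ is already a vertex. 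Ruling out both cases forces the asserted count.

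I do not expect a genuine obstacle: once one notices that the involution $g\mapsto g^j-g$ restricts to $G_d\langle g^j\rangle\mathord{\setminus}\{g^j\}$, the pairing trick from Lemma~\ref{111} transfers essentially verbatim. The only point requiring a little attention is that the argument really uses two different hypotheses at two different moments, namely the known vertex status of $p(S)$ in the ``both elements'' case and the would-be vertex status of $p(S\cup\{g^j\})$ in the ``no element'' case, so both sides of the pairing are needed to pin down the exact cardinality $2^{\sigma(g^j)-1}-1$.
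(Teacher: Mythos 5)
Your proposal is correct and follows essentially the same route as the paper's own proof: the involution $g\mapsto g^j-g$ pairs up $G_d\langle g^j\rangle\mathord{\setminus}\{g^j\}$, the ``no element'' case gives a second decomposition of $p(S\cup\{g^j\})$, and the ``both elements'' case gives a second decomposition of $p(S)$, contradicting its vertexhood. Nothing is missing; your remark that the two cases invoke two different hypotheses is exactly the point the paper's argument relies on.
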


Lemma~\ref{edge} immediately results in an oracle $(O_{S\cup\{g^j\},G_d})$, that returns {\sc false} when $S\langle{g^j}\rangle$ does not contain exactly $2^{\sigma (g^j)-1}-1$ vectors; that is, when the point $p(S\cup \{g^j\})$ is certified not to be a vertex of $H_\infty^+(d,1)$. Computationally, calling $(O_{S\cup\{g^j\},G_d})$ first is significantly more efficient than just calling $(LO_{S\cup\{g^j\},G_d})$ because, in practice it allows to quickly discard a large number of candidates for vertexhood. Proposition~\ref{L2} illustrates how $(O_{S\cup\{g^j\},G_d})$ can be used to identify vertices of the White Whale in any dimension.

\begin{prop}\label{L2}
For any $d\geq 2$, $\widetilde{\mathcal{L}}_{2}(G_d)$ is equal to $\{(0,\dots,0,1,2)\}$, or equivalently
to $\{S_2^1\}$ where $S_2^1=\{(0,\dots,0,1),(0,\dots,0,1,1)\}$.
\begin{proof}
Consider a vertex $p(S)$ in $\widetilde{\mathcal{L}}_{k}(G_d)$ and a vector $g^j$ in $G\backslash S$. Since $S\langle{g^j}\rangle$ is a subset of $S$ and $g^j$ does not belong to $S$, the condition that $S\langle{g^j}\rangle\cup\{g^j\}$ admits exactly $2^{\sigma(g^j)-1}$ elements implies 
$$
2^{\sigma(g^j)-1}\leq |S|+1\mbox{.}
$$

As in addition, $S$ contains exactly $k$ elements, 
$$
{\sigma(g^j)}\leq 1+\lfloor\log_2(k+1)\rfloor\mbox{.}
$$

Hence, taking $k=1$ yields ${\sigma(g^j)}\leq 2$. By assertion $(ii)$ in the statement of Proposition~\ref{sommets}, $\widetilde{\mathcal{L}}_{1}(G_d)=\{(0,\dots,0,1)\}$ and no other $0/1$-valued point is a vertex of $H_{\infty}^+(d,1)$ . Consequently, $g^j$ must satisfy $g^j_d=1$. Since ${\sigma(g^j)}\leq 2$, the only possible candidate for $g^j$ is, up to the relabeling of the first $d-1$ coordinates, the vector $(0,\dots,0,1,1)$. 
Since  $(LO_{S,G_d})$ is feasible for $d=2$ and
$$
S=\{(0,\dots,0,1),(0,\dots,0,1,1)\}\mbox{,}
$$
we obtain $\widetilde{\mathcal{L}}_{2}(G_d)=\{(0,\dots,0,1,2)\}$ as desired.
\end{proof}
\end{prop}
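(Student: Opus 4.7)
My plan is to prove this by a layered argument starting from the known base case $\widetilde{\mathcal{L}}_{1}(G_d)=\{(0,\ldots,0,1)\}$ provided by Proposition~\ref{sommets}$(ii)$, and extending to layer $2$ one generator at a time. Concretely, any canonical vertex in $\widetilde{\mathcal{L}}_{2}(G_d)$ is the sum $p(S\cup\{g^j\})$ for some $S$ in the orbit of the layer-$1$ representative and some $g^j\in G_d\setminus S$, so it suffices to characterize all admissible $g^j$ after fixing $S=\{(0,\ldots,0,1)\}$ and then taking the canonical representative.

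The first key step is to apply Lemma~\ref{edge} with $|S|=1$. The necessary condition $|S\langle g^j\rangle|=2^{\sigma(g^j)-1}-1$ combined with the trivial bound $|S\langle g^j\rangle|\leq|S|=1$ immediately forces $\sigma(g^j)\leq 2$. This reduces the infinite family of potential candidates to two cases that I will dispatch separately. In the case $\sigma(g^j)=1$, the generator is a unit vector $e_i$, so $p(S\cup\{g^j\})$ is a $0/1$-valued point with at least two nonzero coordinates, which by the argument in the proof of Proposition~\ref{sommets}$(ii)$ admits more than one decomposition into a subsum of $G_d$ and therefore cannot be a vertex. In the case $\sigma(g^j)=2$, the equation $|S\langle g^j\rangle|=1$ forces $(0,\ldots,0,1)\in S\langle g^j\rangle$, so $g^j_d=1$, and since the other nonzero coordinate of $g^j$ lies in positions $1,\ldots,d-1$, invariance under coordinate permutations allows me to choose the canonical representative $g^j=(0,\ldots,0,1,1)$. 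This yields the unique candidate $p(S\cup\{g^j\})=(0,\ldots,0,1,2)$.

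Finally, I must verify that this candidate is actually a vertex, i.e., that $(LO_{S\cup\{g^j\},G_d})$ is feasible. I expect this last step to be the main obstacle, since Lemma~\ref{edge} provides only a necessary condition. I plan to exhibit an explicit separating vector, for instance $c=(-3,\ldots,-3,-1,2)$, and check that $c^{T}g\geq 1$ for $g\in\{(0,\ldots,0,1),(0,\ldots,0,1,1)\}$ and $c^{T}g\leq -1$ for every other $g\in G_d$ by a short case split on the values of $g_{d-1}$, $g_d$, and the number of ones in the first $d-2$ coordinates. Alternatively, one can reduce to the $d=2$ case, where $(LO_{S,G_2})$ with $S=\{(0,1),(1,1)\}$ is trivially feasible, and then invoke Proposition~\ref{combi}$(ii)$ to lift this certificate to arbitrary $d$ by observing that $(0,\ldots,0,1,2)$ sits on the facet of $H_\infty^+(d,1)$ where the first $d-2$ coordinates vanish, a copy of $H_\infty^+(2,1)$ translated into $\mathbb{R}^d$.
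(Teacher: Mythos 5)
Your proposal is correct and follows essentially the same route as the paper: bound $\sigma(g^j)\leq 2$ via the necessary condition of Lemma~\ref{edge} applied to the layer-$1$ vertex $(0,\dots,0,1)$, use Proposition~\ref{sommets}$(ii)$ and coordinate-permutation symmetry to reduce to the single candidate $g^j=(0,\dots,0,1,1)$, and then confirm vertexhood through $(LO_{S,G_d})$. Your final verification is in fact slightly more complete than the paper's brief appeal to feasibility, since the certificate $c=(-3,\dots,-3,-1,2)$ does satisfy $c^Tg\geq 1$ on $S\cup\{g^j\}$ and $c^Tg\leq-1$ on the rest of $G_d$ for every $d\geq 2$ (and the alternative lift through the facet structure of Proposition~\ref{combi}$(ii)$ is also sound).
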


Using a similar approach as in Proposition~\ref{L2}, one obtains the first few canonical vertex layers of the White Whale. We recall that $S^i_k$ denotes the $i^{th}$ canonical vertex of the layer $\widetilde{\mathcal{L}}_{k}(G_d)$.

\begin{prop}\label{Lk}
The following assertions hold.
\begin{itemize}
\item[$(i)$]
For any $d\geq 3$, $\widetilde{\mathcal{L}}_{3}(G_d)$ is equal to $\{(0,\dots,0,2,2),(0,\dots,0,1,1,3)\}$, or equivalently to $\{S^1_3,S^2_3\}$ where
$$
\left\{
\begin{array}{l}
S^1_3=S^1_2\cup \{(0,\dots,0,0,1,0)\}\mbox{,}\\
S^2_3=S^1_2\cup\{(0,\dots,0,1,0,1)\}\mbox{.}\\
\end{array}
\right.
$$

\item[$(ii)$]
For any $d\geq 4$, $\widetilde{\mathcal{L}}_{4}(G_d)$ is equal to
$$
\{(0,\dots,0,1,3,3),(0,\dots,0,2,2,4),(0,\dots,0,1,1,1,4)\}\mbox{,}
$$
or equivalently to $\{S^1_4,S^2_4,S^3_4\}$ where 
$$
\left\{
\begin{array}{l}
S^1_4=S^1_3\cup\{(0,\dots,0,0,1,1,1)\}\mbox{,}\\
S^2_4=S^2_3\cup\{(0,\dots,0,0,1,1,1)\}\mbox{,}\\
S^3_4=S^2_3\cup\{(0,\dots,0,1,0,0,1)\}\mbox{.}\\
\end{array}
\right.
$$
\end{itemize}
\end{prop}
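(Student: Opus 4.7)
The plan is to mimic, for $k=3$ and $k=4$, the inductive approach used in the proof of Proposition~\ref{L2}. Since in any zonotope every non-origin vertex $p(T)$ is incident to an edge whose other endpoint is $p(T\setminus\{g^i\})$ for some $g^i\in T$, every element of $\widetilde{\mathcal{L}}_{k+1}(G_d)$ is, up to the permutation action, of the form $S\cup\{g^j\}$ for some $S\in\widetilde{\mathcal{L}}_k(G_d)$ and some $g^j\in G_d\setminus S$. So I would enumerate all such extensions, apply Lemma~\ref{edge} to bound the support $\sigma(g^j)$, use the oracle $(O_{S\cup\{g^j\},G_d})$ to discard most remaining candidates, and for each survivor either exhibit a vector $c$ certifying feasibility of $(LO_{S\cup\{g^j\},G_d})$, or flag two generators in $G_d\langle g^j\rangle$ whose sum equals $g^j$ and are both present in $S$ (or both absent from $S\langle g^j\rangle$), which forces $p(S\cup\{g^j\})$ to admit two distinct subsum decompositions and rules out vertexhood.

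For part $(i)$, the unique starting point is $S_2^1$. With $|S_2^1|=2$, the bound of Lemma~\ref{edge} reads $2^{\sigma(g^j)-1}\leq 3$, so $\sigma(g^j)\in\{1,2\}$. Up to permutations of the first $d-1$ coordinates (which fix $S_2^1$), the inequivalent candidates for $g^j$ form a short, $d$-independent list, organized by whether $g^j_d$ is $0$ or $1$ and by how many of the non-zero coordinates of $g^j$ coincide with the support of $p(S_2^1)$. For each candidate I either construct an explicit certificate $c$ or exhibit a duplicate decomposition. The feasible cases collapse, under the canonical form $p_i\leq p_{i+1}$, to exactly the two points $(0,\ldots,0,2,2)$ and $(0,\ldots,0,1,1,3)$ with the stated decompositions $S_3^1$ and $S_3^2$.

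Part $(ii)$ runs the same procedure starting from each of $S_3^1$ and $S_3^2$. With $|S|=3$ the support bound becomes $\sigma(g^j)\leq 3$; the candidate list is longer but still finite and independent of $d$. Grouping the candidates $g^j$ by the cardinality of $S\langle g^j\rangle$ and by how $g^j$ overlaps the support of $p(S)$, each inequivalent extension is resolved by one of the two templates above. Canonicalization then yields the three orbits $(0,\ldots,0,1,3,3)$, $(0,\ldots,0,2,2,4)$, $(0,\ldots,0,1,1,1,4)$ with representatives $S_4^1$, $S_4^2$, $S_4^3$.

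The hard part will not be any single argument but the bookkeeping of the case analysis, together with the collapse under the permutation group. In particular, some vertices of $\widetilde{\mathcal{L}}_4(G_d)$ can be reached from more than one $(S,g^j)$ pair (once from extending $S_3^1$ and once from extending $S_3^2$), so I must confirm that the reduction to canonical representatives neither loses nor duplicates orbits. The support bound from Lemma~\ref{edge} is what keeps the casework finite and uniform in $d$, which is ultimately what allows the claimed description of $\widetilde{\mathcal{L}}_3(G_d)$ and $\widetilde{\mathcal{L}}_4(G_d)$ to hold for every $d\geq 3$ and $d\geq 4$, respectively.
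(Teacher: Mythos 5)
Your plan follows the same route the paper intends: the paper gives no separate written proof of this proposition and simply defers to ``a similar approach as in Proposition~\ref{L2}'', namely extending each canonical vertex of the previous layer by one generator, bounding $\sigma(g^j)$ through the necessary condition of Lemma~\ref{edge}, and settling the finitely many symmetry classes of candidates by $(LO)$-certificates or duplicate decompositions. The gap is in your dichotomy for the candidates that survive the oracle. You propose to settle each survivor either by exhibiting a feasibility certificate $c$ for $(LO_{S\cup\{g^j\},G_d})$ or by flagging a pair of generators of $G_d\langle g^j\rangle$ summing to $g^j$ that are both in $S$ or both missing from $S\langle g^j\rangle$; but that second test is essentially the oracle itself, so it can never eliminate a candidate that has already passed the oracle, and some oracle survivors are nevertheless not vertices. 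Concretely, starting from $S^1_2=\{(0,\dots,0,1),(0,\dots,0,1,1)\}$, take for $g^j$ a generator with a single non-zero coordinate among the first $d-2$ coordinates: then $\sigma(g^j)=1$, the oracle is vacuously satisfied, yet the resulting point, canonically $(0,\dots,0,1,1,2)$, is not a vertex, because replacing $g^j$ and $(0,\dots,0,1)$ by their sum yields a second decomposition. Likewise, from $S^1_3$ the candidate $g^j=(0,\dots,0,1,1,0)$ passes the oracle (the unique pair of $G_d\langle g^j\rangle\setminus\{g^j\}$ contributes exactly one element of $S$), yet the point $(0,\dots,0,1,2,3)$ it produces is not a vertex, as one sees by merging $g^j$ with $(0,\dots,0,1)$ into $(0,\dots,0,1,1,1)$. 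Neither of your two templates applies to these cases, and you cannot dispose of them by ``failing to find'' a certificate, since infeasibility of the linear system is not witnessed by a single vector.

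The fix is small but必要: add a third template in which you exhibit two elements of $S\cup\{g^j\}$ whose sum is a generator not contained in $S\cup\{g^j\}$ (or, more generally, any other subset of $G_d$ with the same sum), which again forces two distinct decompositions and rules out vertexhood. With that extra move your casework does close: for $k=2$ the only surviving classes are $g^j=(0,\dots,0,1,0)$ and $g^j=(0,\dots,0,1,0,1)$, giving $S^1_3$ and $S^2_3$, and the analogous bookkeeping from $S^1_3$ and $S^2_3$ with $\sigma(g^j)\leq 3$ yields exactly $S^1_4$, $S^2_4$, $S^3_4$. Your use of the support bound to keep the analysis finite and uniform in $d$, and your handling of canonical representatives reached from more than one predecessor, are both fine and consistent with the paper.
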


Lemma~\ref{edge} allows to exploit the structure of the {White Whale in order to further enhance Algorithm~\ref{LOOG}, resulting in Algorithm~\ref{LOOGd} that can be used to efficiently generate all the canonical vertices of the White Whale.

\begin{algorithm}[b]\label{LOOGd}
\KwIn{the dimension $d$}
 $\widetilde{\mathcal{L}}_0(G)\leftarrow\emptyset$

\For{$k=0,\dots,2^{d-1}-2$}{

$i\leftarrow0$

\For{each $S\in\widetilde{\mathcal{L}}_k(G_d)$}{
\For{each $g^j\in G_d\backslash S$}{
\If{$(O_{S\cup \{g^j\},G_d})$ returns {\sc true}}{
\If{$(LO_{S\cup \{ g^j\},G_d})$ is feasible}{
$S^i_{k+1}\leftarrow$ {\em canonical representative of} $S\cup \{ g^j \}$

\If{$S^i_{k+1}$ does not belong to $\widetilde{\mathcal{L}}_{k+1}(G)$}{

$\widetilde{\mathcal{L}}_{k+1}(G)\leftarrow\widetilde{\mathcal{L}}_{k+1}(G)\cup \{S^i_{k+1}\}$

$i\leftarrow{i+1}$

}
}
}
}
}
Save $\widetilde{\mathcal{L}}_{k+1}(G_d)$
}
\caption{Orbitwise vertex generation for the White Whale}
\end{algorithm}

%\newpage  %%%%%%%%%%%%%%%%%%%%%%%%%%%%%%%%%%%%%%%%%%%%%%

Note that in Line 5 of Algorithm~\ref{LOOGd}, we can restrict to only consider the vectors $g^j$ in $G_d\mathord{\setminus}S$ distinct from ${\bf 1}$ (by Lemma~\ref{111}), such that ${\bf 1}-g^j$ does not belong to $S$ (by Lemma~\ref{barg}), and such that $g^j_i\leq g^j_{i+1}$ when $p(S)_i=p(S)_{i+1}$ (by the assertion $(i)$ from Proposition~\ref{combi}).

We benchmarked Algorithm~\ref{LOOGd} by generating all the canonical vertices of $H_{\infty}^+(d,1)$ till $d=9$. As an illustration, we list all the points in $\widetilde{\mathcal{L}}_{k}(G_d)$ for $0\leq{k}\leq2^{d-1}-1$ when $d=3$ in Table~\ref{a3-vertices} and when $d=4$ in Table~\ref{a4-vertices}, where $|\mathcal{O}_{p(S)}|$ denotes the size of the orbit generated by the action of the symmetry group of $H_{\infty}^+(d,1)$ on a canonical vertex $p(S)$. 
% Table~\ref{a8-layer} reports the progress $p_{k,d}$ of Algorithm~\ref{LOOGd}; that is,  
% $$p_{k,d}=\sum_{l=0}^{k} \left(\sum_{S\in \widetilde{\mathcal{L}}_{l}(G_d)}|\mathcal{O}_{p(S)}|\right)\mbox{.} $$
% That table also provides the time $t_{k,d}$  required to determine the layer $\widetilde{\mathcal{L}}_{k}(G_d)$ when $0\leq{k}\leq2^{d-1}-1$ and $d=8$.\\

There are different implementations of the algorithm based on the size of the solution space. For $d=1,\dots,8$, the algorithm is directly executed on a \texttt{CPython} interpreter, which is optimized through \texttt{Cython} and accelerated by the \texttt{IBM CPLEX} optimizer. Although layers are calculated sequentially due to their geometrical positions, the vertex candidates are partitioned into bundles and dispatched to multiple processes for further CPU-bound calculations.

For $d=9$, the algorithm is implemented as an \texttt{Apache Spark} pipeline. The task distribution, result collection and deduplication are managed by the underlying computation engine while the vertex-checking oracles are programmed as a map-reduce step, which is a \texttt{Python} script scheduled by \texttt{Spark} executors. The computation was run on an Ubuntu 16.04 server with a total of 72 threads $2\times$Intel\textsuperscript{\tiny\textregistered} Xeon\textsuperscript{\tiny\textregistered} Processor E5-2695 v4) and 300GB memory, and required 3 months of computational time. The output is stored on a cloud storage.
\begin{table}[b]
$$
\renewcommand{\arraystretch}{1.2}
\begin{array}{c|c|c|c}
\widetilde{\mathcal{L}}_k(G_3) & S^i_k & p(S^i_k) & |\mathcal{O}_{p(S^i_k)}|\\[0.5\smallskipamount]
\hline
\hline
\widetilde{\mathcal{L}}_0(G_3) & S^1_0=\emptyset & (0,0,0) & 2\\
\hline
\widetilde{\mathcal{L}}_1(G_3) & S^1_1=S^1_0\cup\{(0,0,1)\} & (0,0,1) & 6\\
\hline
\widetilde{\mathcal{L}}_2(G_3) & S^1_2=S^1_1\cup\{(0,1,1)\} & (0,1,2) & 12\\
\hline
\widetilde{\mathcal{L}}_3(G_3) & S^1_3=S^1_2\cup\{(0,1,0)\} & (0,2,2) & 6\\[-\smallskipamount]  
& S^2_3=S^1_2\cup\{(1,0,1)\} & (1,1,3) & 6\\
\hline
\hline
& & & a(3)=\sum |\mathcal{O}_{p(S^i_k)}|=32\\
\end{array}
$$
\caption{Sizing the $3$-dimensional  White Whale}\label{a3-vertices}
\end{table}

\begin{table}[t]
$$
\renewcommand{\arraystretch}{1.2}
\begin{array}{c|c|c|c}
\widetilde{\mathcal{L}}_k(G_4) & S^i_k & p(S^i_k) & |\mathcal{O}_{p(S^i_k)}|\\[0.5\smallskipamount]
\hline
\hline
\widetilde{\mathcal{L}}_0(G_4) & S^1_0=\emptyset & (0,0,0,0) & 2\\
\hline 
\widetilde{\mathcal{L}}_1(G_4) & S^1_1=S^1_0\cup\{(0,0,0,1)\} & (0,0,0,1) & 8\\
\hline
\widetilde{\mathcal{L}}_2(G_4) & S^1_2=S^1_1\cup\{(0,0,1,1)\} & (0,0,1,2) & 24\\
\hline 
\widetilde{\mathcal{L}}_3(G_4) & S^1_3=S^1_2\cup\{(0,0,1,0)\} & (0,0,2,2) & 12\\[-\smallskipamount]  
& S^2_3=S^1_2\cup\{(0,1,0,1)\} & (0,1,1,3) & 24\\
\hline
\widetilde{\mathcal{L}}_4(G_4) & S^1_4=S^1_3\cup\{(0,1,1,1)\} & (0,1,3,3) & 24\\[-\smallskipamount]  
& S^2_4=S^2_3\cup\{(0,1,1,1)\} & (0,2,2,4) & 24\\[-\smallskipamount]  
& S^3_4=S^2_3\cup\{(1,0,0,1)\} & (1,1,1,4) & 8\\
\hline 
\widetilde{\mathcal{L}}_5(G_4) & S^1_5=S^1_4\cup\{(0,1,0,1)\} & (0,2,3,4) & 48\\[-\smallskipamount]  
& S^2_5=S^1_4\cup\{(1,0,1,1)\} & (1,1,4,4) & 12\\[-\smallskipamount]  
& S^3_5=S^2_4\cup\{(1,0,0,1)\} & (1,2,2,5) & 24\\
\hline
\widetilde{\mathcal{L}}_6(G_4) & S^1_6=S^1_5\cup\{(0,1,1,0)\} & (0,3,4,4) & 24\\[-\smallskipamount]  
& S^2_6=S^1_5\cup\{(1,0,1,1)\} & (1,2,4,5) & 48\\[-\smallskipamount]  
& S^3_6=S^3_5\cup\{(1,0,1,1)\} & (2,2,3,6) & 24\\
\hline
\widetilde{\mathcal{L}}_7(G_4) & S^1_7=S^1_6\cup\{(0,1,0,0)\} & (0,4,4,4) & 8\\[-\smallskipamount]  
& S^2_7=S^1_6\cup\{(1,0,1,1)\} & (1,3,5,5) & 24\\[-\smallskipamount]
& S^3_7=S^2_6\cup\{(1,0,0,1)\} & (2,2,4,6) & 24\\[-\smallskipamount]
& S^4_7=S^3_6\cup\{(1,1,0,1)\} & (3,3,3,7) & 8\\
\hline
\hline
& & & a(4)=\sum |\mathcal{O}_{p(S^i_k)}|=370\\
\end{array}
$$
\caption{Sizing the $4$-dimensional White Whale}\label{a4-vertices}
\end{table}

It is convenient to identify a generator $g$ with its binary representation.  For example, the generator
$$
g^j=(0,\dots,0,1,0,\dots,0,1)
$$
is identified with the integer $2^j+1$.

Likewise, the set $U_d^2$ of the generators summing up to the vertex
$$
p(U_d^2)=(1,\dots,1,d)
$$
that we considered in Proposition~\ref{sommet} can be identified with the set
$$
\{1,3,5\dots,2^{d-2}+1,2^{d-1}+1\}
$$
and the set $W_d^k$ of the generators  summing up to the vertex
$$
p(W_d^k)=(0,\dots,0,2^{k-1},\dots,2^{k-1}\}
$$
considered in item $(i)$ of Proposition~\ref{sommets} can be identified with the set
$$
\{1,2,3,\dots,2^{k}-1\}\mbox{.}
$$

Since the generation of the canonical vertices of $H_{\infty}^+(8,1)$ gives the vertices of $\widetilde{\mathcal{L}}_{k}(G_d)$ up to $k=8$ for all $d$, we can slightly warm-start Algorithm~\ref{LOOGd} by beginning the computation from $\widetilde{\mathcal{L}}_{8}(G_9)$. 

It might be quite speculative to draw any empirical intuition based on data available only till $d=9$. However, the following pattern may hold at least for the first $d$: the algorithm reaches relatively quickly the layer $\widetilde{\mathcal{L}}_{2^{d-2}+d}(G_d)$, the last $d$ layers are also relatively easy to compute, and the bulk of the computation results from the determination of the remaining $2^{d-2}-2d$ layers. Over this range, the size of the layers grows almost linearly to reach about $4\%$ of $a(d)$ for $d=7$, $2\%$ for $d=8$, and $1\%$ for $d=9$. Assuming that the same trend continues for $d=10$, Algorithm~\ref{LOOGd} would require the determination of a layer of size $0.5\%$ of $a(10)$ which is currently intractable as the determination of the largest layer of $a(9)$ already requires between one and two days.

\section{The edges of the White Whale}\label{edge-gen}

Consider a subset $S$ of $G_d$ and an element $g$ of $S$. Assume that both $p(S)$ and $p(S\mathord{\setminus}\{g\})$ are vertices of $H_\infty^+(d,1)$. Since $H_\infty^+(d,1)$ is zonotope, it must then have an edge with vertices $p(S)$ and $p(S\backslash \{g\})$. In other words, any edge of  $H_\infty^+(d,1)$ connects a vertex in $\mathcal{L}_{k-1}(G_d)$  to a vertex in $\mathcal{L}_{k}(G_d)$ for some $k$. As the proposed algorithms traverse the edges between two consecutive layers to generate the vertices, these algorithms can be used to generate the edges as well. However, in practice the number of edges can be significantly larger than the number of vertices and thus generating the edges of the White Whale quickly becomes intractable memory-wise. Consequently we propose an approach that, assuming that the vertices are determined by Algorithm~\ref{LOOGd}, counts the number of edges between $\mathcal{L}_{k-1}(G_d)$ and $\mathcal{L}_{k}(G_d)$ instead of generating them. The total number of edges is then obtained as a sum over $k$.

Given a vertex $p(S)$ of $H_\infty^+(d,1)$ distinct from the origin $p(\emptyset)$, let  $\delta^-_S$ denote the number of edges between $p(S)$ and a vertex in $\mathcal{L}_{|S|-1}(G_d)$:
$$
\delta^-_S=|\{g\in{S}:  p(S\backslash \{g\})\in\mathcal{L}_{|S|-1}(G_d)\}|\mbox{.}
$$

We also set $\delta^-_\emptyset=0$. The quantity $\delta^-_S$ can be seen as the {\em degree from below} of $p(S)$; that is, the number of edges between $p(S)$ and a vertex in the layer immediately below the one containing $p(S)$. Consider for example
$$
S=\{(0,0,1),(0,1,0),(0,1,1)\}\mbox{.}
$$

In that case, $p(S)$ is equal to $(0,2,2)$ and is indeed a vertex of $H_\infty^+(3,1)$. In fact, $p(S)$ is a vertex of the hexagonal facet of $H_\infty^+(3,1)$ contained in the hyperplane of equation $x_1=0$. In particular, both $p(S\backslash \{(0,0,1)\})$ and $p(S\backslash \{(0,1,0)\})$
are vertices of $H_\infty^+(3,1)$ while $p(S\backslash \{(0,1,1)\})$ is not. Thus $\delta^-_S=2$ as illustrated in Figure~\ref{Fig_H3-L}. By Proposition~\ref{degree-}, the degree from below of a vertex $p(S)$ of $H_\infty^+(d,1)$ is always $1$ when $S$ contains exactly $2^{d-1}$ generators.

\begin{prop}\label{degree-}
If $S$ contains exactly $2^{d-1}$ generators and $p(S)$ is a vertex of $H_\infty^+(d,1)$, then $\delta^-_S=1$. Moreover, exactly $|\mathcal{L}_{2^{d-1}}(G_d)|$ edges of the White Whale are equal to ${\bf 1}$ up to translation.
\begin{proof}
By Lemma~\ref{111} the vector ${\bf 1}$ belongs to $S$. According to the same proposition, $p(S\backslash\{g\})$ is not a vertex of $H_\infty^+(d,1)$ when $g$ is an element of $S$ other than ${\bf 1}$. Thus, $\delta^-_S = 1$ and the set of edges between $\mathcal{L}_{2^{d-1}-1}(G_d)$  and $\mathcal{L}_{2^{d-1}}(G_d)$ consists of exactly  $|\mathcal{L}_{2^{d-1}}(G_d)|$ edges equal, up to translation, to ${\bf 1}$, see Lemma~\ref{edge111}. As a consequence, $|\mathcal{L}_{2^{d-1}-1}(G_d)|=|\mathcal{L}_{2^{d-1}}(G_d)|$.
\end{proof}
\end{prop}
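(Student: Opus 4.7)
The plan is to deduce both claims from Lemma~\ref{111} and Lemma~\ref{edge111}, plus the general fact (recalled in Section~\ref{sec:zonotope}) that every vertex $p(S)$ of a zonotope with $S\neq\emptyset$ lies on at least one edge of the form $\{p(S),p(S\setminus\{g\})\}$ for some $g\in S$. These three ingredients should be sufficient; no additional constructions are needed.

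First I would establish the upper bound $\delta^-_S\leq 1$. Since $p(S)$ is a vertex and $|S|=2^{d-1}$, Lemma~\ref{111} forces $\mathbf{1}\in S$. For any $g\in S$ with $g\neq\mathbf{1}$, the set $S\setminus\{g\}$ still contains $\mathbf{1}$ but has only $2^{d-1}-1$ elements, so the contrapositive of Lemma~\ref{111} asserts that $p(S\setminus\{g\})$ cannot be a vertex of $H_\infty^+(d,1)$. Thus $g=\mathbf{1}$ is the only candidate whose removal can yield a vertex in $\mathcal{L}_{|S|-1}(G_d)$. For the matching lower bound, I would invoke the general zonotope fact above: $p(S)$ must lie on at least one edge of the form $\{p(S),p(S\setminus\{g\})\}$, which in our case forces $g=\mathbf{1}$ to actually realise such an edge. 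Combining the two bounds yields $\delta^-_S=1$, with the unique downward neighbour being $p(S\setminus\{\mathbf{1}\})$.

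Second, for the count of edges of $H_\infty^+(d,1)$ equal to $\mathbf{1}$ up to translation. By Lemma~\ref{edge111}, each such edge joins a vertex of $\mathcal{L}_{2^{d-1}-1}(G_d)$ to one of $\mathcal{L}_{2^{d-1}}(G_d)$. By the first part, every vertex $p(S)\in\mathcal{L}_{2^{d-1}}(G_d)$ is the upper endpoint of exactly one such edge (the one joining it to $p(S\setminus\{\mathbf{1}\})$), and distinct $S$ give distinct edges. Summing over $\mathcal{L}_{2^{d-1}}(G_d)$ therefore counts every translate-of-$\mathbf{1}$ edge exactly once, yielding the announced total of $|\mathcal{L}_{2^{d-1}}(G_d)|$. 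The ancillary equality $|\mathcal{L}_{2^{d-1}-1}(G_d)|=|\mathcal{L}_{2^{d-1}}(G_d)|$ then follows either from central symmetry (Remark~2.1(i), which sends layer $k$ bijectively to layer $2^d-1-k$) or from the fact that $S\mapsto S\setminus\{\mathbf{1}\}$ is an injection onto $\mathcal{L}_{2^{d-1}-1}(G_d)$.

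There is no real obstacle: the only delicate point is making sure that the lower bound $\delta^-_S\geq 1$ is established from a genuine edge, not merely from the fact that $p(S\setminus\{\mathbf{1}\})$ happens to be a vertex. This is taken care of by the general zonotope property cited above, which guarantees the existence of an incident ``subtract-one-generator'' edge and, combined with the uniqueness of the candidate, pins down which generator that is.
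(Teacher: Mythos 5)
Your proposal is correct and follows essentially the same route as the paper: Lemma~\ref{111} forces ${\bf 1}\in S$ and rules out every other generator as a downward neighbour, and Lemma~\ref{edge111} localises the translates of ${\bf 1}$ between layers $2^{d-1}-1$ and $2^{d-1}$, giving the count $|\mathcal{L}_{2^{d-1}}(G_d)|$. The only difference is that you make explicit the lower bound $\delta^-_S\geq 1$ via the general ``subtract-one-generator edge'' fact from Section~\ref{sec:zonotope}, a step the paper leaves implicit, which is a welcome clarification rather than a deviation.
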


Summing up the edges encountered while traversing all the layers of  $H_\infty^+(d,1)$ yields that the number $e(d)$ of edges of the White Whale satisfies:
$$
e(d) 
=\sum_{k=1}^{2^d-1}   
 \sum_{p(S)\in\mathcal{L}_{k}(G_d)}
 \delta^-_S\mbox{.}
$$

\begin{table}
$$
\renewcommand{\arraystretch}{1.2}
\begin{array}{c|c|c|c|c|c}
\widetilde{\mathcal{L}}_k(G_3) & S^i_k & p(S^i_k) & |\mathcal{O}_{p(S^i_k)}| & \delta^-_{S^i_k} & |\mathcal{O}_{p(S^i_k)}|\delta^-_{S^i_k}\\[\smallskipamount]
\hline
\hline
%\widetilde{\mathcal{L}}_0(G_3) 	& S^1_0=\emptyset & (0,0,0,0) & 2\\
%\hline 
\widetilde{\mathcal{L}}_1(G_3) & S^1_1=S^1_0\cup\{(0,0,1)\} & (0,0,1) & 6 & 1 & 6\\
\hline
\widetilde{\mathcal{L}}_2(G_3) & S^1_2=S^1_1\cup\{(0,1,1)\} & (0,1,2) & 12 & 1 & 12\\
\hline 
\widetilde{\mathcal{L}}_3(G_3) & S^1_3=S^1_2\cup\{(0,1,0)\} & (0,2,2) & 6 & 2 & 12\\[-\smallskipamount]  
& S^2_3=S^1_2\cup\{(1,0,1)\} & (1,1,3) & 6 & 2 & 12\\
\hline
\hline 
& & & & & e(3)=48\\
% & & & & a(4)=\sum |\mathcal{O}_{p(S^i_k)}|=370\\
\end{array}
$$
\caption{Counting the edges of the $3$-dimensional White Whale}\label{a3-edges}
\end{table}
\begin{table}
$$
\renewcommand{\arraystretch}{1.2}
\begin{array}{c|c|c|c|c|c}
\widetilde{\mathcal{L}}_k(G_4) & S^i_k & p(S^i_k) & |\mathcal{O}_{p(S^i_k)}| & \delta^-_{S^i_k}  & |\mathcal{O}_{p(S^i_k)}|\delta^-_{S^i_k}\\[\smallskipamount]
\hline
\hline
\widetilde{\mathcal{L}}_1(G_4) & S^1_1=S^1_0\cup\{(0,0,0,1)\} & (0,0,0,1) & 8 & 1  & 8\\
\hline
\widetilde{\mathcal{L}}_2(G_4) & S^1_2=S^1_1\cup\{(0,0,1,1)\} & (0,0,1,2) & 24 & 1 & 24\\
\hline 
\widetilde{\mathcal{L}}_3(G_4) 	& S^1_3=S^1_2\cup\{(0,0,1,0)\} & (0,0,2,2) & 12 & 2 & 24\\[-\smallskipamount]
& S^2_3=S^1_2\cup\{(0,1,0,1)\} & (0,1,1,3) & 24 & 2 & 48\\
\hline
\widetilde{\mathcal{L}}_4(G_4) & S^1_4=S^1_3\cup\{(0,1,1,1)\} & (0,1,3,3) & 24 & 1 & 24\\[-\smallskipamount]  
& S^2_4=S^2_3\cup\{(0,1,1,1)\} & (0,2,2,4) & 24 & 1 & 24\\[-\smallskipamount]  
& S^3_4=S^2_3\cup\{(1,0,0,1)\} & (1,1,1,4) & 8 & 3 & 24\\
\hline 
\widetilde{\mathcal{L}}_5(G_4) 	& S^1_5=S^1_4\cup\{(0,1,0,1)\} %or S^2_4\cup\{(0,0,1,0)\}
& (0,2,3,4) & 48 & 2 & 96\\[-\smallskipamount]  
& S^2_5=S^1_4\cup\{(1,0,1,1)\} & (1,1,4,4) & 12 & 2 & 24\\[-\smallskipamount]  
& S^3_5=S^2_4\cup\{(1,0,0,1)\} %or S^3_4\cup\{(0,1,1,1)\} 
& (1,2,2,5) & 24 & 2 & 48\\
\hline
\widetilde{\mathcal{L}}_6(G_4) 	& S^1_6=S^1_5\cup\{(0,1,1,0)\} & (0,3,4,4) & 24 & 2 & 48\\[-\smallskipamount]  
& S^2_6=S^1_5\cup\{(1,0,1,1)\} % or S^2_5\cup\{(0,1,0,1)\} 
& (1,2,4,5) & 48 & 2 & 96\\[-\smallskipamount]  
& S^3_6=S^3_5\cup\{(1,0,1,1)\} & (2,2,3,6) & 24 & 2 & 48\\
\hline
\widetilde{\mathcal{L}}_7(G_4) 	& S^1_7=S^1_6\cup\{(0,1,0,0)\} & (0,4,4,4) & 8 & 3 & 24\\[-\smallskipamount]  
& S^2_7=S^1_6\cup\{(1,0,1,1)\} % or S^2_6\cup\{(0,1,1,0)\} 
& (1,3,5,5) & 24 & 3 & 72\\[-\smallskipamount]
& S^3_7=S^2_6\cup\{(1,0,0,1)\} % or S^3_6\cup\{(0,0,1,0)\} 
& (2,2,4,6) & 24 & 3 & 72\\[-\smallskipamount]
& S^4_7=S^3_6\cup\{(1,1,0,1)\} & (3,3,3,7) & 8 & 3 & 24\\
\hline
\hline
& & & & & e(4)=760\\
\end{array}
$$
\caption{Counting the edges of the $4$-dimensional White Whale}\label{a4-edges}
\end{table}

The White Whale being centrally symmetric, the summation can be done up to $k=2^{d-1}-1$ to account for all the edges except for the $|\mathcal{L}_{2^{d-1}}(G_d)|$ edges between $\mathcal{L}_{2^{d-1}-1}(G_d)$  and $\mathcal{L}_{2^{d-1}}(G_d)$ identified in Proposition~\ref{degree-}. Further exploiting the symmetry group of $H_\infty^+(d,1)$, we obtain 
$$
e(d)
= 
\left(
\sum_{k=1}^{2^{d-1}-1}   
 \sum_{p(S)\in\widetilde{\mathcal{L}}_{k}(G_d)}
|\mathcal{O}_{p(S)}|   \:  \delta^-_S 
\right)
+
\left(
\sum_{p(S)\in\widetilde{\mathcal{L}}_{2^{d-1}-1}(G_d)} \frac{|\mathcal{O}_{p(S)}|}{2}
\right)
%\mbox{.}
$$
where $|\mathcal{O}_{p(S)}|$ denotes the size of the orbit generated by the action of the symmetry group of $H_{\infty}^+(d,1)$ on a canonical vertex $p(S)$. By this calculation, illustrated in Table~\ref{a3-edges}, the $3$-dimensional White Whale has
$$
(6\times 1+12\times 1+6\times 2 +6\times 2)+\left(\frac{6}{2}+\frac{6}{2}\right)=48
$$
edges, see Figure~\ref{Fig_H3-L}. The corresponding calculation, but in the case of the $4$\nobreakdash-dimensional White Whale is illustrated in Table~\ref{a4-edges}.

The values of $e(d)$ are yielded by two rounds of calculation, which are based on the output of $a(d)$ and deployed as \texttt{Spark} two sets of pipelines. The first set of pipelines are focused on the connectivity between consecutive layers, whose output is further passed to another set of pipelines to produce degree reports of each layer. The resulting number of edges are reported in Table \ref{final}.

%%%%%%%%%%%%%%%%%%%%%%%%%%%%%%%%%%%%%%%%%%%%%%%%%%%%%%%%
\section{The vertex degrees of the White Whale}\label{sec:degree}

Similarly to the degree from below defined in Section~\ref{edge-gen}, we denote by $\delta^+_S$ the {\em degree from above} of a vertex $p(S)$ distinct from $p(G_d)$; that is, the number of edges connecting $p(S)$ to a vertex contained in the layer $\mathcal{L}_{|S|+1}(G_d)$.
$$
\delta^+_S=|\{g\notin{S}:  p(S\cup \{g\})\in\mathcal{L}_{|S|+1}(G_d)\}|\mbox{.}
$$

In addition, we set $\delta^+_{G_d}$ to $0$.
As $H_\infty^+(d,1)$ is centrally symmetric, Proposition~\ref{degree-} can be rewritten as follows. 

\begin{prop}\label{degree+}
If a subset $S$ of $G_d$ contains exactly $2^{d-1}-1$ generators and $p(S)$ is a vertex of $H_\infty^+(d,1)$, then $\delta^+_S=1$. 
\end{prop}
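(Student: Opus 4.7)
The plan is to deduce Proposition~\ref{degree+} from Proposition~\ref{degree-} by invoking the central symmetry of $H_\infty^+(d,1)$. Since $|G_d| = 2^d - 1$, the complementary set $S' = G_d \mathord{\setminus} S$ satisfies $|S'| = 2^{d-1}$, which puts $S'$ in exactly the range covered by Proposition~\ref{degree-}. So the target is to show that $\delta^+_S$ equals $\delta^-_{S'}$, at which point Proposition~\ref{degree-} yields the conclusion $\delta^+_S = \delta^-_{S'} = 1$.

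To establish $\delta^+_S = \delta^-_{S'}$, I would argue via the central symmetry of $H_\infty^+(d,1)$ about the point $\sigma = \tfrac{1}{2}p(G_d)$ recalled in the earlier remark. This involution $x \mapsto 2\sigma - x$ is a linear isomorphism of $H_\infty^+(d,1)$ onto itself, hence it permutes faces of every dimension. On the vertex level it sends $p(T)$ to $p(G_d \mathord{\setminus} T)$, and so it maps the layer $\mathcal{L}_k(G_d)$ bijectively to $\mathcal{L}_{2^d-1-k}(G_d)$. In particular, it sends $p(S)$ to $p(S')$ and sends $\mathcal{L}_{|S|+1}(G_d)$ to $\mathcal{L}_{|S'|-1}(G_d)$.

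The next step is to match edges. For any generator $g \in G_d \mathord{\setminus} S$, the pair $\{p(S), p(S \cup \{g\})\}$ is an edge of $H_\infty^+(d,1)$ if and only if its image under the central symmetry, namely $\{p(S'), p(S' \mathord{\setminus} \{g\})\}$, is an edge of $H_\infty^+(d,1)$; note that $g \in S'$ precisely when $g \notin S$, so the correspondence $g \mapsto g$ gives a bijection between the generators witnessing edges counted by $\delta^+_S$ and those witnessing edges counted by $\delta^-_{S'}$. This bijection yields $\delta^+_S = \delta^-_{S'}$.

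Applying Proposition~\ref{degree-} to $S'$ then gives $\delta^-_{S'} = 1$, completing the proof. There is no real obstacle here; the only care needed is to check that the central symmetry does send $p(T)$ to $p(G_d\mathord{\setminus}T)$ (an immediate computation from $p(T) + p(G_d \mathord{\setminus} T) = p(G_d) = 2\sigma$) and that it acts on edges as an isomorphism, both of which are standard consequences of centrally symmetric zonotope structure already invoked earlier in the excerpt.
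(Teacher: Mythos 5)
Your proposal is correct and follows exactly the route the paper takes: the paper derives Proposition~\ref{degree+} from Proposition~\ref{degree-} by invoking the central symmetry of $H_\infty^+(d,1)$, which is precisely your complementation argument $S\mapsto G_d\mathord{\setminus}S$ giving $\delta^+_S=\delta^-_{G_d\setminus S}=1$. You have merely spelled out the details (the map $p(T)\mapsto p(G_d\mathord{\setminus}T)$ on vertices, layers, and edges) that the paper leaves implicit.
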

The degree $\delta_S$ of a vertex $p(S)$; that is, the number of edges of $H_\infty^+(d,1)$ incident to $p(S)$, is given by $\delta_S=\delta^-_S+\delta^+_S$.  Note that $\delta_{\emptyset}$ and $\delta_{G_d}$ are both equal to $d$. For example, the $32$ vertices of $H_\infty^+(3,1)$ are all of degree $3$. in other words, $H_\infty^+(3,1)$ is a simple zonotope, see Table~\ref{a3-edges-} and Figure~\ref{Fig_H3-L}.

\begin{table}
$$
\renewcommand{\arraystretch}{1.2}
\begin{array}{c|c|c|c|c||c}
\widetilde{\mathcal{L}}_k(G_3) & S^i_k & p(S^i_k) & \delta^-_{S^i_k}  & \delta^+_{S^i_k} & \delta_{S^i_k}\\[\smallskipamount]
\hline
\hline
\widetilde{\mathcal{L}}_0(G_3) & S^1_0=\emptyset & (0,0,0) & 0 & 3 & 3\\
\hline 
\widetilde{\mathcal{L}}_1(G_3) & S^1_1=S^1_0\cup\{(0,0,1)\} & (0,0,1) & 1 & 2 & 3\\
\hline
\widetilde{\mathcal{L}}_2(G_3) & S^1_2=S^1_1\cup\{(0,1,1)\} & (0,1,2) & 1 & 2 & 3\\
\hline 
\widetilde{\mathcal{L}}_3(G_3) & S^1_3=S^1_2\cup\{(0,1,0)\} & (0,2,2) & 2 & 1 & 3\\[-\smallskipamount]
& S^2_3=S^1_2\cup\{(1,0,1)\} & (1,1,3) & 2 & 1 & 3\\
% & & & & a(4)=\sum |\mathcal{O}_{p(S^i_k)}|=370\\
\end{array}
$$
\caption{The vertex degrees of the $3$-dimensional White Whale}\label{a3-edges-}
\end{table}
\begin{table}
$$
\renewcommand{\arraystretch}{1.2}
\begin{array}{c|c|c|c|c||c}
\widetilde{\mathcal{L}}_k(G_4) & S^i_k & p(S^i_k)  & \delta^-_{S^i_k}  & \delta^+_{S^i_k} & \delta_{S^i_k}\\[\smallskipamount]
\hline
\hline
\widetilde{\mathcal{L}}_0(G_4) & S^1_0=\emptyset & (0,0,0,0) & 0 & 4 & 4\\
\hline
\widetilde{\mathcal{L}}_1(G_4) & S^1_1=S^1_0\cup\{(0,0,0,1)\} & (0,0,0,1) & 1 & 3 & 4\\
\hline
\widetilde{\mathcal{L}}_2(G_4) & S^1_2=S^1_1\cup\{(0,0,1,1)\} & (0,0,1,2) & 1 & 3 & 4\\
\hline 
\widetilde{\mathcal{L}}_3(G_4) & S^1_3=S^1_2\cup\{(0,0,1,0)\} & (0,0,2,2) & 2 & 2 & 4\\[-\smallskipamount]
& S^2_3=S^1_2\cup\{(0,1,0,1)\} & (0,1,1,3) & 2 & 2 & 4\\
\hline
\widetilde{\mathcal{L}}_4(G_4) & S^1_4=S^1_3\cup\{(0,1,1,1)\} & (0,1,3,3) & 1 & 3 & 4\\[-\smallskipamount]
& S^2_4=S^2_3\cup\{(0,1,1,1)\} & (0,2,2,4) & 1 & 3 & 4\\[-\smallskipamount]
& S^3_4=S^2_3\cup\{(1,0,0,1)\} & (1,1,1,4) & 3 & 3 & 6\\
\hline 
\widetilde{\mathcal{L}}_5(G_4) & S^1_5=S^1_4\cup\{(0,1,0,1)\} % or S^2_4\cup\{(0,0,1,0)\} 
& (0,2,3,4) & 2 & 2 & 4\\[-\smallskipamount]
& S^2_5=S^1_4\cup\{(1,0,1,1)\} & (1,1,4,4) & 2 & 4 & 6\\[-\smallskipamount]
& S^3_5=S^2_4\cup\{(1,0,0,1)\} %or S^3_4\cup\{(0,1,1,1)\} 
& (1,2,2,5) & 2 & 2 & 4\\
\hline
\widetilde{\mathcal{L}}_6(G_4) & S^1_6=S^1_5\cup\{(0,1,1,0)\} & (0,3,4,4) & 2 & 2 & 4\\[-\smallskipamount]
& S^2_6=S^1_5\cup\{(1,0,1,1)\} % or S^2_5\cup\{(0,1,0,1)\} 
& (1,2,4,5) & 2 & 2 & 4\\[-\smallskipamount]
& S^3_6=S^3_5\cup\{(1,0,1,1)\} & (2,2,3,6) & 2 & 2 & 4\\
\hline
\widetilde{\mathcal{L}}_7(G_4) & S^1_7=S^1_6\cup\{(0,1,0,0)\} & (0,4,4,4) & 3 & 1 & 4\\[-\smallskipamount]
& S^2_7=S^1_6\cup\{(1,0,1,1)\} %or S^2_6\cup\{(0,1,1,0)\} 
& (1,3,5,5) & 3 & 1 & 4\\[-\smallskipamount]
& S^3_7=S^2_6\cup\{(1,0,0,1)\} %or S^3_6\cup\{(0,0,1,0)\} 
& (2,2,4,6) & 3 & 1 & 4\\[-\smallskipamount]
& S^4_7=S^3_6\cup\{(1,1,0,1)\} & (3,3,3,7) & 3 & 1 & 4\\
\end{array}
$$
\caption{The vertex degrees of the $4$-dimensional White Whale}\label{a4-edges-}
\end{table}
The calculation of the vertex-degrees of the $4$-dimensional White Whale is illustrated in Table~\ref{a4-edges-}. The number $o(d)$ of orbits or, equivalently the number of canonical vertices, the average vertex degree $2e(d)/a(d)$, and the average size of an orbit $a(d)/o(d)$ are all given up to dimension $9$ in Table~\ref{final}. These initial values may indicate that the average size of an orbit $a(d)/o(d)$ is a large fraction of the largest possible orbit size of $2d!$.

\begin{table}[b]
\makebox[\linewidth]{
$
\begin{array}{c|c|c|c|c|c}

d & a(d) & e(d)  &  \frac{2e(d)}{a(d)} & o(d) & \frac{a(d)}{2d!o(d)} \\
\hline
% 1 & 2 & \mbox{Evans~\cite{Evans1995} (1992)} \\
2 & 6 & 6 & 2  & 2 & 75\%\\
3 & 32 & 48 & 3 & 5 & \approx 53\%\\
4 & 370 & 760 & \approx 4.1 & 18 & \approx 43\%\\ 
5 & 11\,292 & 30\,540 & \approx 5.4  & 112  &  \approx 43\%\\ 
6 & 1\,066\,044 & 3\,662\,064 & \approx 6.9 & 1\:512  & \approx 49\%\\  
7 & 347\,326\,352 & 1\,463\,047\,264 & \approx 8.4 & 56\:220 & \approx 61\%\\ 
8 & 419\,172\,756\,930 & 2\,105\,325\,742\,608 & \approx 10.0 &  6\:942\:047 & \approx 75\%\\
9 & 1\,955\,230\,985\,997\,140 & 11\,463\,171\,860\,268\,180 & \approx 11.7  & 3\,140\,607\,258 & \approx 86\%
\end{array}
$
}
\smallskip

\caption{Some sizes of the White Whale.}\label{final}
\end{table}

\begin{rem}
All the known values of $e(d)$ are multiples of $d(d+1)$ and, when $d$ is equal to $7$, we obtain from Table~\ref{final} that

$$
\frac{e(d)}{4d(d+1)}= 6\,531\,461\mbox{,}
$$
which is a  prime number.
\end{rem}

%%%%%%%%%%%%%%%%%%
%%%%%%%%%%%%%%%%%%%%%%%%%%

Let us now turn our attention back to the vertices $p(U_d^k)$ of $H_\infty^+(d,1)$ provided by Proposition~\ref{sommet}. We can determine exactly the degree of these vertices.

\begin{lem}\label{expo0}
The degree of $p(U_d^k)$ from below is $\displaystyle\delta^-_{U_d^k}=\displaystyle{d-1 \choose k-1}$.

\end{lem}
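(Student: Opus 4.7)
The plan is to characterize exactly which $g \in U_d^k$ can be removed from $U_d^k$ so that the resulting sum remains a vertex of $H_\infty^+(d,1)$. I claim this is possible precisely when $\sigma(g) = k$: such a $g$ must have $g_d = 1$ together with exactly $k - 1$ of its first $d - 1$ entries equal to $1$, which gives the announced count $\binom{d-1}{k-1}$.

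For the direction $\sigma(g) < k$, I would show that $p(U_d^k \setminus \{g\})$ admits two distinct $0/1$-decompositions as a sum of generators, and hence fails to be a vertex. Since $\sigma(g) \leq k - 1 \leq d - 2$, one can pick $i^* \in \{1, \ldots, d - 1\}$ with $g_{i^*} = 0$. Let $e_{i^*}$ denote the standard basis vector with a $1$ in position $i^*$, and set $g' = g + e_{i^*}$. Then $\sigma(g') \leq k$ and $g'_d = 1$, so $g' \in U_d^k \setminus \{g\}$. The identity
$$p\bigl((U_d^k \setminus \{g'\}) \cup \{e_{i^*}\}\bigr) = p(U_d^k) - g' + e_{i^*} = p(U_d^k \setminus \{g\})$$
then exhibits the desired second decomposition, since $e_{i^*}$ has a zero last coordinate and hence does not belong to $U_d^k$.

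For the converse direction $\sigma(g) = k$, I would exhibit a feasible $c$ for $(LO_{U_d^k \setminus \{g\}, G_d})$. Using the coordinate-permutation symmetry from Proposition~\ref{combi}$(i)$, I can assume $g$ is the vector whose last $k$ entries are $1$ and whose first $d - k$ entries are $0$. Setting $A = \{d - k + 1, \ldots, d - 1\}$ and $B = \{1, \ldots, d - k\}$, I would define $c_i = -2k$ for $i \in A$, $c_i = -2(k-1)$ for $i \in B$, and $c_d = 2k(k-1) - 1$. Writing $a = |\{i \in A : h_i = 1\}|$ and $b = |\{i \in B : h_i = 1\}|$ for $h \in G_d$, the required inequalities reduce to a case analysis whose binding constraints occur at $h \in \{e_d, g\}$, at vectors in $U_d^k$ with $(a, b) = (k - 2, 1)$, and at vectors in $G_d \setminus U_d^k$ with $h_d = 1$ minimizing $2ka + 2(k-1)b$ subject to $a + b \geq k$, $a \leq k - 1$, $b \leq d - k$.

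I expect this last family to be the main obstacle, since the minimizer depends on the size of $B$: when $d \geq 2k$ it is $(a, b) = (0, k)$ giving $c^T h = -1$ exactly, whereas for $d < 2k$ it moves to $(a, b) = (2k - d, d - k)$ and yields $c^T h = 2d - 4k - 1 \leq -1$ after a short computation. Both expressions agree at the boundary $d = 2k$, which confirms that the coefficients of $c$ are essentially forced by the geometry of the problem.
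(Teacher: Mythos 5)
Your proposal is correct and follows essentially the same route as the paper's proof: you characterize the removable generators as exactly those of full support $k$ (yielding the count $\binom{d-1}{k-1}$), certify vertexhood of $p(U_d^k\setminus\{g\})$ through an explicit feasible $c$ for $(LO_{S,G_d})$, and rule out generators of smaller support by exhibiting a second decomposition that trades a generator for a unit vector with last coordinate zero --- the paper does precisely this, only with different numerical values for $c$ and a slightly less uniform swap. The one detail to patch is the degenerate case $k=1$, where your certificate has $c_i=0$ on $B$ and violates $c^Th\le-1$ for $h$ supported in $B$; as in the paper, that case should simply be noted separately, since $U_d^1\setminus\{g\}=\emptyset$ sums to the origin, which is trivially a vertex.
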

\begin{proof}
We recall that $U_d^k$ is defined when $1\leq{k}\leq{d-1}$.
Let us first show that, if $g$ belongs to $U_d^k\mathord{\setminus}U_d^{k-1}$, then $p(U_d^k\mathord{\setminus}\{g\})$ is a vertex of $H_\infty^+(d,1)$. Observe that, when $k=1$, this is immediate as the origin of $\mathbb{R}^d$ is a vertex of $H_\infty^+(d,1)$. Hence we can assume that $k\geq2$. By symmetry, we can moreover assume without loss of generality that $g$ is the generator whose last $k$ coordinates are equal to $1$ and whose first $d-k$ coordinates are equal to $0$. We will use the linear optimization oracle ($LO_{S,G}$) with $S=U_d^k\mathord{\setminus}\{g\}$ and $G=G_d$. 

Consider the vector $c$ of $\mathbb{R}^d$ whose first $d-k$ coordinates are equal to $2-3k$, whose last coordinate is $3k^2-3k-1$, and whose remaining $d-k-1$ coordinates are $-3k$. Consider a vector $g'$ in $U_d^k\mathord{\setminus}\{g\}$. As $g'$ is distinct from $g$, either at least one of its $d-k$ first coordinates is non-zero, and
$$
\sum_{i=1}^{d-1}c_ig'_i\geq (2-3k)-3k(k-2)=-3k^2+3k+2\mbox{,}
$$
or at most $k-2$ of its $d-1$ first coordinates are non-zero, and
$$
\sum_{i=1}^{d-1}c_ig'_i\geq -3k(k-2)=-3k^2+6k\mbox{.}
$$

As $c_d=3k^2-3k-1$ and $k\geq1$, both of these inequalities imply that $c^Tg'\geq1$. Now consider a vector $g'$ in $G_d\mathord{\setminus}[U_d^k\mathord{\setminus}\{g\}]$. If $g'_d=0$, then $c^Tg'\leq-1$ because $g'$ has at least one non-zero coordinate and the first $d-1$ coordinates of $c$ are negative. If $g'_d=1$, then either $g'=g$ or at least $k$ of its $d-1$ first coordinates are non-zero. If $g'=g$, then by construction, 
$$
c^Tg'=-3k(k-1)+3k^2-3k-1=-1\mbox{.}
$$

If at least $k$ of the $d-1$ first coordinates of $g'$ are non-zero, then
$$
c^Tg'\leq(2-3k)k+3k^2-3k-1<-1\mbox{.}
$$

This proves that $p(U_d^k\mathord{\setminus}\{g\})$ is a vertex of $H_\infty^+(d,1)$, as desired.

We now show that, if $g$ belongs to $U_d^{k-1}$, then $p(U_d^k\mathord{\setminus}\{g\})$ is not a vertex of $H_\infty^+(d,1)$. As $U_d^k\mathord{\setminus}U_d^{k-1}$ contains exactly
$$
{d-1 \choose k-1}
$$
vectors, this will prove the proposition. Consider a vector $g$ from $U_d^{k-1}$. By symmetry, we can assume without loss of generality that the last $k-1$ coordinates of $g$ are equal to $1$ and that its first $d-k+1$ coordinates are equal to $0$. Denote by $g'$ the vector in $U_d^k$ whose $k$ last coordinates are equal to $1$ and by $g''$ the vector in $G_d\mathord{\setminus}U_d^k$ whose unique non-zero coordinate is $g''_{d-k+1}$.

By construction, $g=g'-g''$ and as an immediate consequence,
$$
p(U_d^k\mathord{\setminus}\{g\})=p([U_d^k\mathord{\setminus}\{g'\}]\cup\{g''\})\mbox{.}
$$

This proves that $p(U_d^k\mathord{\setminus}\{g\})$ can be decomposed as a sum of two different subsets of $G_d$. Therefore, this point cannot be a vertex of $H_\infty^+(d,1)$.
\end{proof}

\begin{lem}\label{expo1}
The degree of $p(U_d^k)$ from above is $\displaystyle\delta^+_{U_d^k}=\displaystyle{d-1 \choose k}$.
\end{lem}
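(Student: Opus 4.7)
The plan is to classify each candidate $g \in G_d \setminus U_d^k$ according to whether $p(U_d^k \cup \{g\})$ is a vertex of $H_\infty^+(d,1)$ and to count the positive cases. I partition $G_d \setminus U_d^k$ into three groups: (a) vectors $g$ with $g_d = 0$; (b1) vectors $g$ with $g_d = 1$ and $\sigma(g) \geq k + 2$; and (b2) vectors $g$ with $g_d = 1$ and $\sigma(g) = k + 1$. Group (b2) is indexed bijectively by the choice of $\mathrm{supp}(g) \setminus \{d\} \subset \{1, \dots, d-1\}$ of cardinality $k$, so it contains exactly $\binom{d-1}{k}$ vectors; hence proving that precisely the group-(b2) vectors yield vertices establishes the claim.

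For groups (a) and (b1), I would exhibit duplicate $0/1$ decompositions of $p(U_d^k \cup \{g\})$, contradicting vertexhood. In case (a), pick $v \in U_d^k$ with $\mathrm{supp}(v) \cap \mathrm{supp}(g) = \emptyset$ and $\sigma(v) + \sigma(g) > k$---such $v$ exists by an elementary counting check using $1 \leq k \leq d - 1$---so that $v + g$ is a $0/1$ vector in $G_d$ outside $U_d^k \cup \{g\}$ (too heavy to lie in $U_d^k$, and distinct from $g$ at coordinate $d$); the swap that replaces $\{v, g\}$ by $\{v + g\}$ inside $U_d^k \cup \{g\}$ then preserves $p(\cdot)$ while shrinking the subset. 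In case (b1), partition $\mathrm{supp}(g) \setminus \{d\} = I_1 \sqcup I_2$ with $|I_1| = k$ and $|I_2| \geq 1$, and set $g' = e_d + \sum_{i \in I_1} e_i$ and $g'' = \sum_{i \in I_2} e_i$; then $g', g'' \in G_d \setminus (U_d^k \cup \{g\})$ and $g = g' + g''$, so replacing $\{g\}$ by $\{g', g''\}$ again preserves $p(\cdot)$ while enlarging the subset.

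For group (b2), the $S_{d-1}$-symmetry of $U_d^k$ coming from assertion~$(i)$ of Proposition~\ref{combi} reduces the verification to the single representative $g = e_{d-k} + e_{d-k+1} + \cdots + e_d$. I would then apply the oracle $(LO_{U_d^k \cup \{g\}, G_d})$ with the vector $c \in \mathbb{R}^d$ defined by $c_i = -(2k+2)$ for $1 \leq i \leq d - k - 1$, $c_i = -2k$ for $d-k \leq i \leq d - 1$, and $c_d = 2k^2 + 1$. Describing each $v \in G_d$ by its numbers $\ell$ and $m$ of nonzero coordinates in the low block $\{1, \dots, d-k-1\}$ and the middle block $\{d-k, \dots, d-1\}$ together with the flag $v_d \in \{0, 1\}$, the required inequalities reduce to a short enumeration whose tight boundary evaluations are $c^T g = 2k^2 + 1 - 2k \cdot k = 1$ and $c^T(e_1 + e_{d-k+1} + \cdots + e_d) = -(2k+2) - 2k(k-1) + 2k^2 + 1 = -1$, while a monotonicity argument in $\ell$ and $m$ handles the remaining cases uniformly in $d \geq k + 1$. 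The main obstacle is the calibration of $c$: the simultaneous constraints $c^T v \geq 1$ on $U_d^k \cup \{g\}$ and $c^T v \leq -1$ on its complement in $G_d$ force narrow ranges on the three coefficients, and the above choice achieves feasibility with slack exactly one at both tight extremes.
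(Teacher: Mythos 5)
Your proposal is correct and takes essentially the same route as the paper: it rules out the candidates with $g_d=0$ or $\sigma(g)\geq k+2$ by exhibiting a second decomposition of $p(U_d^k\cup\{g\})$ into a subsum of $G_d$, and certifies the ${d-1 \choose k}$ candidates with $g_d=1$ and $\sigma(g)=k+1$ via a symmetry reduction followed by an explicit feasible vector for $(LO_{U_d^k\cup\{g\},G_d})$. The only cosmetic difference is your certificate $c$ with coordinates $-(2k+2)$, $-2k$, $2k^2+1$ in place of the paper's $-2k-1$, $-2k+1$, $2k^2-k+1$; both satisfy the required inequalities, with your tight values $c^Tg=1$ and $-1$ checking out.
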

\begin{proof}
We recall that $U_d^k$ is defined when $1\leq{k}\leq{d-1}$. The proof proceeds as that of Lemma~\ref{expo0}. Consider a vector $g$ that belongs to $U_d^{k+1}\mathord{\setminus}U_d^k$. We show as a first step that $p(U_d^k\cup\{g\})$ is a vertex of $H_\infty^+(d,1)$ by using the oracle $(LO_{S,G})$ with $S=U_d^k\cup\{g\}$ and $G=G_d$.

By symmetry, we can assume without loss of generality that the last $k+1$ coordinates of $g$ are non-zero. Consider the vector $c$ of $\mathbb{R}^d$ whose first $d-k-1$ coordinates are equal to $-2k-1$, whose last coordinate is equal to $2k^2-k+1$ and whose other $k$ coordinates are equal to $-2k+1$. Further consider a vector $g'$ in $U_d^k\cup\{g\}$. If $g'$ is equal to $g$, then by construction
$$
c^Tg'=k(-2k+1)+2k^2-k+1=1\mbox{.}
$$

If $g'$ is not equal to $g$, then at most $k-1$ of its first $d-1$ coordinates are non-zero. As a consequence,
$$
\sum_{i=1}^{d-1}c_ig'_i\geq-(k-1)(2k+1)=-2k^2+k+1\mbox{.}
$$

As $c_d=2k^2-k+1$ and $g'_d=1$, this yields $c^Tg'\geq2$. So far, we have shown that $c^Tg'\geq1$ for every $g'$ in $U_d^k\cup\{g\}$. Now let us consider a vector $g'$ in $G_d\mathord{\setminus}[U_d^k\cup\{g\}]$ and show that $c^Tg'\leq-1$. If $g'_d=0$, then $c^Tg'$ must be negative because $g'$ has at least one non-zero coordinate and the $d-1$ first coordinates of $c$ are negative. If $g'_d=1$, then $g'$ must have at least $k+1$ non-zero coordinates. As in addition $g'$ is distinct from $g$, at least one its first $d-k-1$ coordinates is equal to $1$. As a consequence,
$$
\sum_{i=1}^{d-1}c_ig'_i\leq-(2k+1)-(k-1)(2k-1)=-2k^2+k-2\mbox{.}
$$

Since $c_d=2k^2-k+1$ and $g'_d=1$, this yields $c^Tg'\leq-1$. According to the oracle $(LO_{S,G})$ with $S=U_d^k\cup\{g\}$ and $G=G_d$, the point $p(U_d^k\cup\{g\})$ is then necessarily a vertex of $H_\infty^+(d,1)$, as desired.

Let us now show that for any vector $g$ in $G_d\mathord{\setminus}U_d^{k+1}$, the point $p(U_d^k\cup\{g\})$ is never a vertex of $H_\infty^+(d,1)$.  Denote by $j$ the number of non-zero coordinates of $g$ and assume, first that $g_d=0$. By symmetry, we can further assume without loss of generality that $g_i=1$ exactly when $d-j\leq{i}\leq{d-1}$. Denote by $g'$ the vector in $G_d\mathord{\setminus}U_d^k$ such that $g'_i=1$ when
$$
d-\max\{j,k\}\leq{i}\leq{d}\mbox{.}
$$

By construction, $g'-g$ belongs to $U_d^k$ but $g'$ does not. Moreover,
$$
p(U_d^k\cup\{g\})=p([U_d^k\mathord{\setminus}\{g'-g\}]\cup\{g'\})\mbox{.}
$$

This shows that $p(U_d^k\cup\{g\})$ admits two decompositions into a sum of vectors from $G_d$ and therefore cannot be a vertex of $H_\infty^+(d,1)$.

Finally, assume that $g_d=1$. In this case, $j$ is at least $k+2$. By symmetry we can further assume that last $j$ coordinates of $g$ are non-zero. Denote by $g'$ the vector in $G_d$ whose only non-zero coordinate is $c_{d-1}$ and observe that $g-g'$ does not belong to $U_d^k$ because it has at least $k+1$ non-zero coordinates. Moreover, $g'$ does not belong to $U_d^k\cup\{g\}$ either, and
$$
p(U_d^k\cup\{g\})=p(U_d^k\cup\{g-g',g'\})\mbox{.}
$$

As above, this shows that $p(U_d^k\cup\{g\})$ admits two decompositions into a sum of vectors from $G_d$. Therefore, it cannot be a vertex of $H_\infty^+(d,1)$.

As there are exactly
$$
{d-1 \choose k}
$$
vectors in $U_d^{k+1}\mathord{\setminus}U_d^k$, this proves the lemma.
\end{proof}

\begin{thm}\label{expo}
The degree of $p(U_d^k)$ is $\displaystyle{d \choose k}$.
\end{thm}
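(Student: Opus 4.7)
The plan is very short because the two preceding lemmas do essentially all the work. Recall from Section~\ref{edge-gen} that the total degree of a vertex $p(S)$ decomposes as $\delta_S = \delta^-_S + \delta^+_S$, where $\delta^-_S$ counts edges to the layer immediately below and $\delta^+_S$ counts edges to the layer immediately above. So once both one-sided degrees are known, the statement is a one-line corollary.

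First I would invoke Lemma~\ref{expo0}, which gives $\delta^-_{U_d^k} = \binom{d-1}{k-1}$, and then Lemma~\ref{expo1}, which gives $\delta^+_{U_d^k} = \binom{d-1}{k}$. Adding these and applying Pascal's rule
$$
\binom{d-1}{k-1} + \binom{d-1}{k} = \binom{d}{k}
$$
yields the claimed value of $\delta_{U_d^k}$.

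There is no real obstacle here, since the two lemmas have already carried out all the combinatorial and linear-optimization work needed to certify which neighboring subsums are vertices of $H_\infty^+(d,1)$. The only thing worth remarking on is the range of $k$: Lemmas~\ref{expo0} and~\ref{expo1} are stated for $1 \leq k \leq d-1$, which is precisely the range in which $U_d^k$ was defined, so the identity covers every case in the statement of Theorem~\ref{expo}. Finally, since $\binom{d}{k}$ grows exponentially in $d$ for $k$ near $d/2$, this formula gives the advertised family of vertices whose degree is exponential in the dimension.
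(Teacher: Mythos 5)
Your proposal is correct and follows exactly the paper's route: the paper also deduces Theorem~\ref{expo} immediately from Lemmas~\ref{expo0} and~\ref{expo1} via $\delta_{U_d^k}=\delta^-_{U_d^k}+\delta^+_{U_d^k}$ and Pascal's rule. Nothing is missing.
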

\begin{proof}
Theorem~\ref{expo} immediately follows from Lemmas~\ref{expo0} and~\ref{expo1}.
\end{proof}

\begin{cor} Setting $k=\lfloor d/2 \rfloor$ in  Theorem~\ref{expo} yields a vertex of degree
$$
{d \choose \lfloor d/2 \rfloor }\geq\frac{2^d}{d+1}
$$
% when $k$ is equal to $\lfloor d/2 \rfloor$, 
thus providing the announced vertex of the White Whale whose degree is exponential in the dimension.
\end{cor}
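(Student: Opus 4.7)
The plan is direct: apply Theorem~\ref{expo} with $k=\lfloor d/2\rfloor$ and then bound the resulting central binomial coefficient from below by a classical averaging argument. Since $U_d^k$ is defined precisely when $1\leq k\leq d-1$, the vertex $p(U_d^{\lfloor d/2\rfloor})$ exists for every $d\geq 2$, and Theorem~\ref{expo} asserts that its degree in $H_\infty^+(d,1)$ is exactly $\binom{d}{\lfloor d/2\rfloor}$. So nothing beyond a substitution is required for the equality part of the statement.

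For the inequality $\binom{d}{\lfloor d/2\rfloor}\geq 2^d/(d+1)$, I would invoke the well-known fact that the central binomial coefficient is the largest of the $d+1$ binomial coefficients $\binom{d}{0},\binom{d}{1},\ldots,\binom{d}{d}$. Since these numbers sum to $2^d$, the maximum is at least the average, which yields
$$
\binom{d}{\lfloor d/2\rfloor}\geq\frac{1}{d+1}\sum_{i=0}^{d}\binom{d}{i}=\frac{2^d}{d+1}\mbox{.}
$$
The unimodality of the row $\binom{d}{0},\ldots,\binom{d}{d}$ is immediate from the identity $\binom{d}{i+1}/\binom{d}{i}=(d-i)/(i+1)$, which is at least $1$ exactly when $i\leq\lfloor d/2\rfloor -1$, so this needs no further justification.

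Finally, the qualitative conclusion that the degree is exponential in $d$ follows at once, since $2^d/(d+1)$ is exponential in $d$. There is no real obstacle in this argument: all of the substance is carried by Theorem~\ref{expo}, and the corollary's sole role is to choose the optimal value of $k$ and record the resulting lower bound, exhibiting the announced family of vertices whose degrees grow exponentially with the dimension.
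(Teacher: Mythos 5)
Your proposal is correct and matches the paper's (implicit) argument: the corollary is immediate from Theorem~\ref{expo} together with the standard bound that the central binomial coefficient, being the largest of the $d+1$ entries summing to $2^d$, is at least their average $2^d/(d+1)$. The only nitpick is that for odd $d$ the ratio $\binom{d}{i+1}/\binom{d}{i}=(d-i)/(i+1)$ equals $1$ also at $i=\lfloor d/2\rfloor$ (not only for $i\leq\lfloor d/2\rfloor-1$), but this tie does not affect the maximality of $\binom{d}{\lfloor d/2\rfloor}$ or the conclusion.
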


\begin{rem}
The degree of $p(U_d^k)$ is maximal among the vertices of $H_\infty^+(d,1)$ when $2\leq{d}\leq9$. We hypothesize it is the case in any dimension. 
\end{rem}

\noindent{\bf Acknowledgement.}  The authors thank the anonymous referees, Zachary Chroman, Lukas K\"uhne, and Mihir Singhal for providing valuable comments and suggestions.
\bibliography{WhiteWhale}
\bibliographystyle{ijmart}

\end{document}